\documentclass[11pt]{amsart}

\usepackage[T1]{fontenc}
\usepackage[utf8]{inputenc}
\usepackage{lmodern}
\usepackage{amsmath,amssymb,amsfonts,amsthm}
\usepackage{aliascnt}
\usepackage{mathtools}
\usepackage{mathrsfs}
\usepackage{microtype}

\usepackage[margin=1in]{geometry}
\usepackage[shortlabels]{enumitem}
\setlist{listparindent=\parindent, parsep=0pt}
\usepackage[dvipsnames]{xcolor}
\usepackage{hyperref}
\hypersetup{linktocpage,colorlinks,linkcolor=blue,citecolor=magenta,urlcolor=blue}
\usepackage[capitalise]{cleveref}
\crefformat{equation}{(#2#1#3)}
\allowdisplaybreaks

\theoremstyle{plain}
\newtheorem{thm}{Theorem}[section]
\newaliascnt{prop}{thm}
\newtheorem{prop}[prop]{Proposition}
\aliascntresetthe{prop}
\newaliascnt{lemma}{thm}
\newtheorem{lemma}[lemma]{Lemma}
\aliascntresetthe{lemma}
\newaliascnt{corollary}{thm}

\aliascntresetthe{corollary}

\theoremstyle{definition}
\newaliascnt{mydef}{thm}
\newtheorem{mydef}[mydef]{Definition}
\aliascntresetthe{mydef}
\newaliascnt{remark}{thm}
\newtheorem{remark}[remark]{Remark}
\aliascntresetthe{remark}

\crefname{thm}{Theorem}{Theorems}
\Crefname{thm}{Theorem}{Theorems}
\crefname{prop}{Proposition}{Propositions}
\Crefname{prop}{Proposition}{Propositions}
\crefname{lemma}{Lemma}{Lemmas}
\Crefname{lemma}{Lemma}{Lemmas}
\crefname{corollary}{Corollary}{Corollaries}
\Crefname{corollary}{Corollary}{Corollaries}
\crefname{mydef}{Definition}{Definitions}
\Crefname{mydef}{Definition}{Definitions}
\crefname{remark}{Remark}{Remarks}
\Crefname{remark}{Remark}{Remarks}

\numberwithin{equation}{section}

\DeclareMathOperator*{\argmin}{argmin}

\newcommand{\Sd}{\mathbb S^{d-1}}
\newcommand{\R}{\mathbb R}

\newcommand{\sg}{\sigma}
\newcommand{\Pc}{\mathcal P}
\newcommand{\Hh}{\mathcal H}
\newcommand{\M}{\mathcal M}
\newcommand{\F}{\mathcal F}
\newcommand{\E}{\mathcal E}
\newcommand{\Ent}{\mathrm H}
\newcommand{\ip}[2]{\left\langle #1,#2\right\rangle}
\newcommand{\norm}[1]{\left\lVert #1\right\rVert}
\newcommand{\ep}{\varepsilon}

\newcommand{\qu}{q_u}

\title[Phase transitions for the noisy transformer model]{Phase transitions for the noisy transformer model in arbitrary dimension}

\author[K. Mun]{Kyunghoo Mun}
\address{Kyunghoo Mun, Department of Mathematical Sciences, Carnegie Mellon University, Pittsburgh, PA 15213, USA}
\email{kmun@andrew.cmu.edu}
\thanks{K.M. was supported by NSF grant DMS-2441170.}

\author[M. Rosenzweig]{Matthew Rosenzweig}
\address{Matthew Rosenzweig, Department of Mathematical Sciences, Carnegie Mellon University, Pittsburgh, PA 15213, USA}
\email{mrosenz2@andrew.cmu.edu}
\thanks{M.R. was supported by NSF grants DMS-2441170, DMS-2345533, DMS-2342349.}

\begin{document}

\begin{abstract}
We study the McKean--Vlasov free energy on the unit sphere associated with the unnormalized self-attention (USA) model for noisy transformer dynamics.  We prove a sharp global-minimizer dichotomy in every dimension $d\ge2$.  There is a unique $\beta_*^{(d)}>0$ such that
\begin{equation*}
        \frac{I_{d/2+1}(\beta_*^{(d)})}{I_{d/2}(\beta_*^{(d)})}=\frac1d,
\end{equation*}
where $I_\nu$ is the modified Bessel function of the first kind.  For $0<\beta\le \beta_*^{(d)}$, the uniform density remains the unique global minimizer up to the linear-stability threshold
\begin{equation*}
        K_\#^{(d)}(\beta)=\frac{\beta^{d/2}}{2^{d/2}\Gamma(d/2)I_{d/2}(\beta)},
\end{equation*}
and the phase transition is continuous.  For $\beta>\beta_*^{(d)}$, the uniform density is not globally minimizing at $K_\#^{(d)}(\beta)$, so the critical coupling satisfies $K_c<K_\#^{(d)}(\beta)$ and the transition is discontinuous.  This result generalizes the authors' recent $d=2$ work \cite{MunRosenzweig2026Circle} to arbitrary dimension.  The proof uses the sharp Beckner--Onofri/logarithmic Hardy-Littlewood-Sobolev (HLS) inequality on the sphere, together with a Funk--Hecke/Bessel coefficient computation and a degree-two quartic obstruction.
\end{abstract}

\maketitle

\section{Introduction}\label{sec:introduction}

Transformers are neural-network architectures in which self-attention updates each token by comparing it with the other tokens through query--key inner products and then aggregating the corresponding value vectors \cite{VaswaniEtAl2017}.  Recent mathematical work isolates this mechanism by considering deep, continuous-time limits of attention layers \cite{GeshkovskiLetrouitPolyanskiyRigollet2025} (see also the works \cite{ChenRubanovaBettencourtDuvenaud2018,E2017Proposal,HaberRuthotto2017,LuLiHeSunDongQinWangLiu2020,DuttaGautamChakrabartiChakraborty2021,SanderAblinBlondelPeyre2022} for a machine learning perspective).  After layer normalization, token embeddings may be viewed as directions on a sphere, and the attention update becomes an interacting particle system on $\Sd$; its large-token, or mean-field, limit is a McKean--Vlasov equation, alternatively a Wasserstein gradient flow of a certain free energy \cite{GeshkovskiLetrouitPolyanskiyRigollet2025,Rigollet2025MeanFieldTransformers}.  In this note, we consider a simplified architecture called the unnormalized self-attention (USA) dynamics in the literature, which features an isotropic spherical attention interaction as we now describe.

Fix token directions $x_i\in\Sd$ and inverse temperature $\beta>0$.  In the one-head identity-parameter case, the unnormalized self-attention drift is
\begin{equation}\label{eq:intro-usa-drift}
        \dot x_i=P_{x_i}^{\perp}\left(\frac1n\sum_{j=1}^n e^{\beta x_i\cdot x_j}x_j\right),
        \qquad
        P_x^\perp=I-x\otimes x.
\end{equation}
Let $\sg$ denote normalized surface measure on $\Sd$.  The centered pair potential used throughout the paper is
\begin{equation}\label{eq:intro-centered-kernel}
\begin{gathered}
        W_\beta(x,y):=\frac{e^{\beta x\cdot y}-m_d(\beta)}{\beta},
        \qquad x,y\in\Sd,\\
        m_d(\beta):=\int_{\Sd}e^{\beta e_0\cdot z}\,d\sg(z),
        \qquad e_0\in\Sd.
\end{gathered}
\end{equation}
The quantity $m_d(\beta)$ is independent of the choice of $e_0$ by rotation invariance.  Hence, $W_\beta$ is centered, in the sense that
\begin{equation}
        \int_{\Sd}W_\beta(x,z)\,d\sg(z)=0,
        \qquad x\in\Sd.
\end{equation}
Indeed, since $m_d(\beta)$ is constant in $x$, the ambient gradient in the first variable is
\begin{equation}\label{eq:intro-gradient-potential}
        \nabla_x W_\beta(x,y)=e^{\beta x\cdot y}y.
\end{equation}
Projecting \eqref{eq:intro-gradient-potential} onto $T_x\Sd$ gives the spherical force in \eqref{eq:intro-usa-drift}.  This convention differs from the raw drift potential $(e^{\beta x\cdot y}-1)/\beta$ used in the authors' circular model \cite{MunRosenzweig2026Circle} by the additive constant $(1-m_d(\beta))/\beta$; this additive constant is immaterial for the drift and would shift the free energy by a quantity depending only on $K$ and $\beta$.

Adding isotropic Brownian noise to the spherical particle dynamics leads formally, in the mean-field limit, to a McKean--Vlasov diffusion whose stationary variational problem is the minimization of an entropy-minus-interaction free energy.  At the modeling level, this noise can be viewed as unresolved layerwise variability; at the variational level, it regularizes the attractive attention dynamics and smooths small-scale collapse mechanisms.  Following existing terminology \cite{Rigollet2025MeanFieldTransformers}, we refer to the Brownian perturbation of this spherical attention model as the ``noisy transformer.'' We now define the variational problem.

Let $\Pc(\Sd)$ be the Borel probability measures on $\Sd$, and let $\qu\equiv1$ denote the uniform density with respect to $\sg$.  We use the relative entropy $\Ent(\mu\mid\sg)$, defined precisely in \eqref{eq:entropy-definition} below.  The free energy is
\begin{equation}\label{eq:intro-free-energy}
        \F_{K,\beta}(\mu)
        :=\Ent(\mu\mid\sg)
        -K\iint_{\Sd\times\Sd}W_\beta(x,y)\,d\mu(x)d\mu(y),
        \qquad K\ge0.
\end{equation}
Here $K$ is the coupling strength, equivalently the inverse noise strength after normalization.

The phase transition problem asks when the uniform measure ceases to be a global minimizer of \eqref{eq:intro-free-energy}.  Define
\begin{equation}\label{eq:intro-Kc}
        K_c^{(d)}(\beta)
        :=\sup\left\{K\ge0:\ \{\sg\}=\argmin_{\mu\in\Pc(\Sd)}\F_{K,\beta}(\mu)\right\}.
\end{equation}
We use the terminology ``degree $\ell$'' to refer to perturbations of the uniform density in the spherical-harmonic subspace $\Hh_\ell$, and let $K_\ell^{(d)}(\beta)$ denote the coupling at which the second variation in those modes becomes neutral.  In particular, the degree-one coupling is given by
\begin{equation}\label{eq:intro-Kone}
        K_1^{(d)}(\beta)
        :=\frac{\beta^{d/2}}{2^{d/2}\Gamma(d/2)I_{d/2}(\beta)}.
\end{equation}
We let $K_{\#}^{(d)}(\beta)$ denote the threshold for the linear stability of $\qu$.  By \cref{lem:transformer-coefficients,lem:bessel-comparison} proved below, the first linear instability of $\qu$ occurs in these degree-one modes, so the linear-stability threshold  satisfies $K_\#^{(d)}(\beta)=K_1^{(d)}(\beta)$.
The transition is said to be continuous when $\sigma$ is the unique minimizer of $\F_{K,\beta}$ at $K=K_c^{(d)}(\beta)$ and supercritical global minimizers converge to $\sg$ as $K$ decreases to the critical value.  Otherwise, the transition is said to be discontinuous. 
These notions are formalized for general centered kernels in \cref{def:transition}.

This problem is part of the broader McKean--Vlasov phase-transition theory in which linear stability thresholds are compared with global minimizer thresholds \cite{ChayesPanferov2010,CarrilloGvalaniPavliotisSchlichting2020}.  For transformer-type models, the deterministic and noisy spherical dynamics have been developed by Geshkovski et al.\ in the mean-field transformer literature \cite{GeshkovskiLetrouitPolyanskiyRigollet2023Clusters,GeshkovskiLetrouitPolyanskiyRigollet2025,Rigollet2025MeanFieldTransformers}.  Layer-normalized self-attention models leading to gradient flows for probability measures on the unit sphere were analyzed by Burger et al.\ \cite{BurgerKabriKorolevRoithWeigand2025}.  In arbitrary dimension, Shalova and Schlichting studied stationary McKean--Vlasov equations on high-dimensional spheres and other compact manifolds, including transformer particle systems; they characterized critical points through the free energy, analyzed spherical-convolution bifurcations near the uniform state, and gave sufficient discontinuity criteria \cite{ShalovaSchlichting2026}.  On the circle, Balasubramanian et al.\ gave a Fourier-analytic description of stationary solutions and applied it to noisy mean-field transformers \cite{BalasubramanianBanerjeeRigollet2025}.  The authors recently proved the sharp circular variational dichotomy for the noisy-transformer kernel, with threshold $I_2(\beta_*)=I_1(\beta_*)/2$ \cite{MunRosenzweig2026Circle}.  The present note gives the corresponding sharp global-minimizer threshold in every dimension $d\ge2$.

Our main result is the following theorem.

\begin{thm}[Main result]\label{thm:main}
For every $d\ge2$, the equation
\begin{equation}\label{eq:beta-star-main}
        \frac{I_{d/2+1}(\beta_*^{(d)})}{I_{d/2}(\beta_*^{(d)})}=\frac1d
\end{equation}
has a unique solution $\beta_*^{(d)}>0$.  For the centered noisy-transformer free energy \eqref{eq:intro-free-energy}, the degree-one threshold $K_1^{(d)}(\beta)$ in \eqref{eq:intro-Kone} equals the linear stability threshold $K_\#^{(d)}(\beta)$ for every $\beta>0$, and the following statements hold.
\begin{enumerate}[(1)]
\item If $0<\beta\le\beta_*^{(d)}$, then
\begin{equation}\label{eq:main-continuous-conclusion}
        K_c^{(d)}(\beta)=K_\#^{(d)}(\beta),
\end{equation}
the transition is continuous, and $\sg$ is the unique global minimizer of $\F_{K,\beta}$ for every $0\le K\le K_\#^{(d)}(\beta)$.
\item If $\beta>\beta_*^{(d)}$, then
\begin{equation}\label{eq:main-discontinuous-conclusion}
        K_c^{(d)}(\beta)<K_\#^{(d)}(\beta),
\end{equation}
the transition is discontinuous, and global minimality of $\sigma$ is lost strictly before linear stability.
\item For every $\beta>0$ and every $K$ satisfying
\begin{equation}\label{eq:main-critical-uniqueness-range}
        0\le K\le \frac12 K_c^{(d)}(\beta),
\end{equation}
the uniform density $\qu$ is the unique critical point of $\F_{K,\beta}$.  In particular, in the continuous regime $0<\beta\le\beta_*^{(d)}$, the uniform density is the unique critical point for $0\le K\le K_\#^{(d)}(\beta)/2$.
\end{enumerate}
\end{thm}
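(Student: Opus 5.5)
The plan has three stages: compute the interaction in spherical-harmonic modes, show that degree one destabilizes first, and then treat the two regimes separately.

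\emph{Spectral setup.} Expand $e^{\beta x\cdot y}$ by Funk--Hecke. With $\lambda=d/2-1$, the coefficient on $\Hh_\ell$ is
\[
w_\ell(\beta)=\Gamma(d/2)\,(2/\beta)^{\lambda}\, I_{\ell+\lambda}(\beta),
\]
up to the normalization of $\sg$. Centering removes $\ell=0$. For $\rho=1+f$ with $f\in\Hh_\ell$, the second variation of $\F_{K,\beta}$ at $\qu$ is
\[
\int f^2\,d\sg\,\bigl(1-2K\,w_\ell/\beta\bigr),
\]
so $K_\ell=\beta/(2w_\ell)$. At $\ell=1$ this reproduces \eqref{eq:intro-Kone}, after checking constants. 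Since $I_{\nu}(\beta)$ is strictly decreasing in $\nu$ for $\beta>0$, we get $w_\ell<w_1$ for $\ell\ge2$, hence $K_\#=K_1$. This is \cref{lem:transformer-coefficients,lem:bessel-comparison}.

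\emph{Existence and uniqueness of $\beta_*$.} The ratio $r(\beta)=I_{d/2+1}/I_{d/2}$ behaves like $\beta/(d+2)$ near $0$ and tends to $1$ as $\beta\to\infty$. It is strictly increasing, by the standard Bessel-ratio monotonicity (Amos, or the Riccati equation for $I_{\nu+1}/I_\nu$). Hence $r(\beta)=1/d$ has exactly one root.

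\emph{Continuous regime $\beta\le\beta_*$.} The idea is to reduce to the logarithmic HLS inequality. Degree-one modes $x\mapsto a\cdot x$ correspond to the conformal directions on $\Sd$. Beckner's sharp log-HLS/Onofri inequality on $\Sd$, written in its dual spectral form, says that the entropy controls the quadratic form $\sum_\ell c_\ell\|f_\ell\|^2$ with weights $c_\ell\propto \Gamma(\ell)\Gamma(d)/\Gamma(\ell+d)$, with equality along conformal orbits. I will write
\[
W_\beta=\theta\, G_{\mathrm{HLS}}+R_\beta,
\qquad \theta=\frac{w_1}{c_1}.
\]
Here $G_{\mathrm{HLS}}$ is the log kernel, which has coefficients $c_\ell$, and $R_\beta$ has coefficients $w_\ell-\theta c_\ell$. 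If every $\ell\ge2$ coefficient of $R_\beta$ is $\le0$, then $R_\beta$ is negative semidefinite on mean-zero functions, and log-HLS gives $\F_{K_1,\beta}\ge0$. Equality then forces $\rho$ into the conformal family, and on that family I would check directly that $\sg$ is the only minimizer.

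The key inequality is therefore $w_\ell/w_1\le c_\ell/c_1$ for all $\ell\ge2$. The quadratic coefficient is matched by construction, so the decisive condition is the degree-two one. At $\ell=2$ it reads $I_{d/2+1}/I_{d/2}\le 1/d$ up to the exact constant, which is precisely $\beta\le\beta_*$. For $\ell\ge3$ the ratio $w_{\ell+1}/w_\ell$ decays faster than $c_{\ell+1}/c_\ell=\ell/(\ell+d)$, via the bound $I_{\nu+1}/I_\nu\le\beta/(\nu+1+\sqrt{\cdots})$. Once $\F_{K_\#}\ge0$ is established, strictly smaller $K$ gives uniqueness, so $K_c=K_\#$. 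Continuity follows from compactness and lower semicontinuity: any limit of supercritical minimizers as $K\downarrow K_c$ is a minimizer at $K_c$, hence equals $\sg$.

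\emph{Discontinuous regime $\beta>\beta_*$, the main obstacle.} Here I would exhibit a $\mu$ with $\F_{K_1,\beta}(\mu)<0$. For this I will expand $\F$ along $\rho_\ep=1+\ep\,a\cdot x+\ep^2 g$ with $g\in\Hh_2$ optimized, which is the ``degree-two quartic obstruction.'' The second-order term vanishes at $K_1$. The cubic term vanishes by the symmetry $x\mapsto-x$. The quartic term combines the entropy contribution $-\tfrac1{12}\int(a\cdot x)^4$, the cross term of entropy with $g$, and the degree-two energy $(1-2K_1w_2/\beta)\|g\|^2$. Optimizing over $g$ gives a quartic coefficient that is negative exactly when $w_2/w_1>$ the critical value, i.e.\ $\beta>\beta_*$. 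The delicate part is the exact constant: I must check that the $\Hh_2$ projection of $(a\cdot x)^2$ produces the same threshold $1/d$. Given $\F_{K_1}(\mu)<0$, continuity in $K$ yields $K_c<K_1$. Discontinuity then follows because $\sg$ is linearly stable near $K_c$, so minimizers cannot bifurcate from it.

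\emph{Part (3).} A critical point satisfies $\rho\propto e^{2K W_\beta*\rho}$. Write $\rho_0=\qu$, fix $K\le K_c/2$, and take a critical $\rho$. Using convexity, compare $\F_{2K}$ at $\qu$ and $\rho$: $\F_{2K}(\rho)\ge0$ since $2K\le K_c$. Pairing the Euler--Lagrange equation with $\log\rho$ gives an identity in which entropy equals $2K$ times the interaction. Together these force interaction $\le0$, and then positive-definiteness of the interaction form (all $w_\ell>0$) forces $\rho=1$.
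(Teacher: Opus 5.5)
Your architecture matches the paper's: Funk--Hecke/Bessel coefficients, dual Beckner--Onofri with conformal equality cases, the degree-two quartic obstruction, and the Kirkwood--Monroe entropy identity. The gap is that the theorem's actual content is not established. That content is that the log-HLS majorization and the quartic obstruction switch at exactly $r_2=1/d$; you assert this, and the constants you write down contradict it.

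\textbf{Continuous side.} The weights you quote are $c_\ell\propto\Gamma(\ell)\Gamma(d)/\Gamma(\ell+d)$, with $c_{\ell+1}/c_\ell=\ell/(\ell+d)$. These belong to $\mathbb S^{d}$, not to $\Sd$, which has dimension $d-1$. With them $c_2/c_1=1/(d+1)$, so your degree-two condition becomes $r_2\le 1/(d+1)$. The continuous side would then be proved only on a strictly smaller $\beta$-range than $(0,\beta_*^{(d)}]$. The correct weights are $b_{\ell,d}=\Gamma(\ell)\Gamma(d)/\Gamma(\ell+d-1)$; this gives $1/\ell$ on the circle (Lebedev--Milin) and $2/(\ell(\ell+1))$ on $\mathbb S^2$ (Onofri). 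Hence $b_{2,d}=1/d$ and $b_{\ell+1,d}/b_{\ell,d}=\ell/(\ell+d-1)$. With these weights your ratio-by-ratio argument for higher degrees does work. It must start at $\ell=2$, since $w_3/w_2$ is needed, and the bound $\rho_\nu(\beta)<\beta/(2\nu+2)$ suffices because $\beta_*^{(d)}<2+6/(d+1)$.

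\textbf{Discontinuous side.} The $\ep^4$ coefficient is never computed, and the one term you state has the wrong sign. Since $(1+s)\log(1+s)=s+\tfrac12s^2-\tfrac16s^3+\tfrac1{12}s^4+O(s^5)$, the contribution is $+\tfrac1{12}\int\varphi^4$. With your sign the coefficient is already negative at $g=0$. That would make $\sg$ non-minimizing at $K_1$ for every $\beta$, contradicting the continuous side. The destabilizing term is instead the cross term $-\tfrac12\int\varphi^2g$ coming from $-\tfrac16 s^3$. The missing computation is as follows. Take $\varphi=\sqrt d\,x_1$, $h=\varphi^2-1\in\Hh_2$ and $g=ch$. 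Then $\int\varphi^4\,d\sg=3d/(d+2)$ and $\norm{h}_2^2=\int\varphi^2h\,d\sg=2(d-1)/(d+2)$. Minimizing over $c$ gives $\frac{1}{4(d+2)}\bigl[d-\frac{d-1}{1-r_2}\bigr]$, which is negative if and only if $r_2>1/d=b_{2,d}$. That this threshold coincides with $b_{2,d}$ is exactly what makes the dichotomy sharp.

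\textbf{Part (3).} Integrate $\log q=2KT_Wf-\log Z$ against $f=q-\qu$. The resulting identity is not ``entropy $=2K\times$ interaction'' but $\Ent(q\sg\mid\sg)+\Ent(\sg\mid q\sg)=2K\ip{f}{T_Wf}$. With the identity as you state it, combining with $\F_{2K}(q\sg)\ge\F_{2K}(\sg)$ yields only $0\ge0$, and ``interaction $\le0$'' does not follow. Now use the correct identity together with $\Ent(q\sg\mid\sg)\ge 2K\ip{f}{T_Wf}$; this also holds for $2K=K_c$ by passing to the limit $K\uparrow K_c$. Together they give $\Ent(\sg\mid q\sg)\le0$, hence $q=\qu$ directly, with no positive-definiteness needed.
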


\begin{remark}[Small-$\beta$ limit and Kuramoto model]
The small-$\beta$ limit connects the spherical attention interaction with the classical synchronization model on the circle.  Since $m_d(\beta)=1+O(\beta^2)$, the centered kernel satisfies $W_\beta(x,y)=x\cdot y+O(\beta)$, while the particle drift in \eqref{eq:intro-usa-drift} converges to $P_{x_i}^{\perp}n^{-1}\sum_{j=1}^n x_j$.  In dimension $d=2$, writing $x_i=(\cos\theta_i,\sin\theta_i)$ gives
\begin{equation}
       \frac1n\sum_{j=1}^n e^{\beta\cos(\theta_i-\theta_j)}
        \sin(\theta_j-\theta_i)
        \longrightarrow
        \frac1n\sum_{j=1}^n\sin(\theta_j-\theta_i),
\end{equation}
which is the drift in the identical-oscillator Kuramoto model without frequency disorder \cite{Kuramoto1975}.  The Kuramoto model and its noisy variants have an extensive literature, and much is known in both the physics and mathematics literatures, a proper review of which is beyond the scope of this paper.  We refer the interested reader to \cite{Strogatz2000,AcebronBonillaPerezVicenteRitortSpigler2005}, \cite[Section 3.1]{GeshkovskiLetrouitPolyanskiyRigollet2025}, \cite[Section 1.2]{MunRosenzweig2026KuramotoDaido} and the references therein.
\end{remark}

The discontinuous alternative in \cref{thm:main} identifies the strict separation between the global-minimizer threshold and the linear-stability threshold.  The exact threshold can equivalently be written as a quotient.  If
\begin{equation}\label{eq:intro-Ebeta-quotient}
        E_\beta(q)=\iint_{\Sd\times\Sd}e^{\beta x\cdot y}q(x)q(y)\,d\sg(x)d\sg(y),
\end{equation}
denotes the interaction energy, then, in the normalization of \eqref{eq:intro-free-energy},
\begin{equation}\label{eq:intro-Kc-quotient}
        K_c^{(d)}(\beta)
        =\inf_{\substack{q\ge0,\ \int_{\Sd}q\,d\sg=1,\\ E_\beta(q)-E_\beta(1)>0}}
        \frac{\beta\Ent(q\sg\mid\sg)}{E_\beta(q)-E_\beta(1)}.
\end{equation}
Thus, the theorem determines when this quotient is equal to $K_\#^{(d)}(\beta)$, but it does not give a closed form for the infimum when $\beta>\beta_*^{(d)}$; the formal asymptotic discussion below addresses this threshold in two limiting regimes.

For reference, numerical solutions of \eqref{eq:beta-star-main} in low dimensions are
\begin{equation}\label{eq:beta-star-table}
\begin{gathered}
\begin{array}{c|ccccc}
 d&2&3&4&5&6\\ \hline
 \beta_*^{(d)}&2.446918&1.816155&1.575257&1.445485&1.363867
\end{array}\\[0.4em]
\begin{array}{c|cccc}
 d&7&8&9&10\\ \hline
 \beta_*^{(d)}&1.307662&1.266553&1.235161&1.210397
\end{array}
\end{gathered}
\end{equation}
The function $d\mapsto\beta_*^{(d)}$ is strictly decreasing (see \cref{lem:bessel-comparison}).
The large-$d$ expansion proved in \cref{lem:bessel-comparison} is
\begin{equation}\label{eq:intro-beta-star-large-d}
        \beta_*^{(d)}=1+\frac2d+\frac1{d^2}+O(d^{-3}).
\end{equation}

We also record formal asymptotic predictions for the exact discontinuous threshold. These formulas are not used in the proof of \cref{thm:main}; they are obtained by a finite-dimensional Lyapunov--Schmidt/Landau expansion of the quotient \eqref{eq:intro-Kc-quotient} near $\beta_*^{(d)}$ and by a localized-cap ansatz as $\beta\to\infty$.  Let $r_\ell(\beta)$ be as in \eqref{eq:intro-ratio-formula} below.  As $\beta\downarrow\beta_*^{(d)}$, one expects
\begin{equation}\label{eq:intro-tricritical-Kc-preview}
        K_\#^{(d)}(\beta)-K_c^{(d)}(\beta)
        =c_{\mathrm{tri}}^{(d)}
        (\beta-\beta_*^{(d)})^2
        +o\bigl((\beta-\beta_*^{(d)})^2\bigr),
\end{equation}
where $c_{\mathrm{tri}}^{(d)}>0$ is the constant produced by the formal finite-dimensional Landau expansion.
At the opposite end, if $|\Sd|$ denotes unnormalized surface area, the predicted large-$\beta$ behavior in the present normalization is
\begin{equation}\label{eq:intro-large-beta-Kc-preview}
        K_c^{(d)}(\beta)
        =\frac{\beta e^{-\beta}}2
        \left[(d-1)\log\beta+(d-1)\log\log\beta+C_\infty(d)+o(1)\right],
        \qquad \beta\to\infty,
\end{equation}
where
\begin{equation}\label{eq:intro-large-beta-constant-preview}
        C_\infty(d)=(d-1)\log(d-1)+2\log|\Sd|-(d-1)\log(2\pi).
\end{equation}
The rigorous verification of these asymptotics, and of the structure of minimizers for the quotient \eqref{eq:intro-Kc-quotient}, is the subject of ongoing work by the authors.

The proof of \cref{thm:main} is the higher-dimensional analogue of the authors' circular argument \cite{MunRosenzweig2026Circle}, with the harmonic-analysis input changed.  On the circle, the coercive entropy estimate comes from the classical Lebedev--Milin inequality \cite{LebedevMilin1965}.  On $\Sd$, it comes from the sharp Beckner--Onofri inequality \cite{Beckner1993}, equivalently the logarithmic Hardy-Littlewood-Sobolev (HLS) inequality of Carlen and Loss \cite{CarlenLoss1992}.  Its dual form controls the entropy by spherical harmonic projections with weights
\begin{equation}\label{eq:intro-beckner-weights}
        b_{\ell,d}=\frac{\Gamma(\ell)\Gamma(d)}{\Gamma(\ell+d-1)}
        =\prod_{j=1}^{\ell-1}\frac{j}{d+j-1},
        \qquad \ell\ge1.
\end{equation}
The Funk--Hecke coefficients of $W_\beta$ are modified Bessel functions as computed in \cref{lem:transformer-coefficients}, and their ratios are
\begin{equation}\label{eq:intro-ratio-formula}
        r_\ell(\beta)=\frac{I_{\ell+d/2-1}(\beta)}{I_{d/2}(\beta)},
        \qquad \ell\ge1.
\end{equation}
Segura's monotonicity theorem for Bessel ratios \cite{Segura2021} implies $r_\ell(\beta)\le b_{\ell,d}$ for all $\ell\ge2$ exactly in the parameter range needed for the continuous side of \cref{thm:main}.  When $r_2(\beta)>b_{2,d}=1/d$, a perturbation mixing a neutral degree-one mode with the degree-two component generated by its square lowers the free energy at order four.  This is the degree-two quartic obstruction proving the discontinuous side.

The rest of the paper is organized as follows.  \Cref{sec:formulation} isolates variational facts for general centered zonal kernels, including critical-point uniqueness below $K_c/2$.  \Cref{sec:beckner} proves the Beckner--Onofri coercivity criterion.  \Cref{sec:transformer-coefficients} computes the noisy-transformer Funk--Hecke coefficients and proves the Bessel-ratio comparison.  \Cref{sec:quartic} gives the degree-two quartic obstruction.  \Cref{sec:proof-main} assembles these ingredients to prove \cref{thm:main}.

\medskip
\noindent\textbf{Acknowledgement.} The second author thanks Borjan Geshkovski for originally sparking his interest in the mathematics of transformers.
\medskip

\section{Variational formulation}\label{sec:formulation}

We first separate the abstract variational mechanism from the special form of the attention kernel.  Throughout this section $W$ is a continuous centered zonal kernel on the sphere.  The noisy-transformer kernel will be inserted only after the general endpoint criteria are established.

We use normalized surface measure $\sg$ throughout.  For background on spherical harmonics, see \cite{AtkinsonHan2012}.  The spherical harmonic decomposition is
\begin{equation}\label{eq:spherical-harmonic-decomposition}
        L^2(\Sd,\sg)=\bigoplus_{\ell=0}^\infty \Hh_\ell,
\end{equation}
where $\Hh_\ell$ is the space of degree-$\ell$ spherical harmonics.  We write $\Pi_\ell$ for the $L^2(\sg)$-orthogonal projection onto $\Hh_\ell$.  Let $\Delta_{\Sd}$ denote the Laplace--Beltrami operator on $\Sd$.  With the sign convention $-\Delta_{\Sd}Y=a_{\ell,d}Y$ for $Y\in\Hh_\ell$, the corresponding positive eigenvalue is
\begin{equation}\label{eq:laplace-eigenvalue}
        a_{\ell,d}=\ell(\ell+d-2).
\end{equation}
The reproducing kernel of $\Hh_\ell$ is denoted by $Z_{\ell,d}$ and normalized by
\begin{equation}\label{eq:reproducing-kernel}
        \Pi_\ell f(x)=\int_{\Sd}Z_{\ell,d}(x,y)f(y)\,d\sg(y).
\end{equation}
Since each $\Hh_\ell$ is finite-dimensional and consists of smooth functions, $\Pi_\ell u$ is well-defined as an element of $\Hh_\ell$ for any distribution $u$ on $\Sd$, by duality against $\Hh_\ell$.  Thus, expressions such as $\Pi_\ell(q-\qu)$ are understood distributionally whenever $q-\qu$ is not a priori in $L^2(\sg)$.

As mentioned in the introduction, our notation for the relative entropy is
\begin{equation}\label{eq:entropy-definition}
        \Ent(\mu\mid\sg):=
        \begin{cases}
        \displaystyle\int_{\Sd}q\log q\,d\sg, & \mu=q\sg,\ q\log q\in L^1(\sg),\\
        +\infty, & \text{otherwise.}
        \end{cases}
\end{equation}
In particular, finite free energy forces absolute continuity.

Let $W:\Sd\times\Sd\to\R$ be continuous, symmetric, centered, and zonal.  Centered means
\begin{equation}\label{eq:centering}
        T_W\qu=0,
        \qquad
        T_Wf(x)=\int_{\Sd}W(x,y)f(y)\,d\sg(y).
\end{equation}
Zonal means that $W(x,y)$ is a function of $x\cdot y$.  Then the Funk--Hecke formula diagonalizes $T_W$ on spherical harmonics \cite[Section 2.6]{AtkinsonHan2012}: there are real eigenvalues $\lambda_\ell$ such that
\begin{equation}\label{eq:funk-hecke-general}
        T_WY=\lambda_\ell Y,
        \qquad Y\in\Hh_\ell,
        \qquad \lambda_0=0.
\end{equation}
The associated free energy is
\begin{equation}\label{eq:general-free-energy}
        \F_K(\mu)=\Ent(\mu\mid\sg)-K\iint_{\Sd\times\Sd}W(x,y)\,d\mu(x)d\mu(y).
\end{equation}
If $\mu=q\sg$ and $f=q-\qu$, centering gives the gap identity
\begin{equation}\label{eq:gap-identity-formal}
        \F_K(q\sg)-\F_K(\sg)=\Ent(q\sg\mid\sg)-K\ip{f}{T_Wf}.
\end{equation}

For $q_\ep=\qu+\ep f$ with $f\in\Hh_\ell$, $\norm{f}_2=1$, and $|\ep|$ small, the $\ep^2$ coefficient in $\F_K(q_\ep\sg)-\F_K(\sg)$ is $\frac12(1-2K\lambda_\ell)$; equivalently, the normalized second variation in that direction is $1-2K\lambda_\ell$.  Hence, the first linear stability threshold is
\begin{equation}\label{eq:Ksharp-general}
        K_\#=\frac{1}{2\lambda_*},
        \qquad
        \lambda_*:=\max_{\ell\ge1}\max\{\lambda_\ell,0\}.
\end{equation}
Assuming $\lambda_1\ge0$ and letting $K_1 = \frac{1}{2\lambda_1}$ denote the threshold for degree-one stability, the model is degree-one-leading when $K_\#=K_1$. Equivalently, defining the normalized eigenvalue ratios
\begin{equation}\label{eq:Kone-ratios-general}
        r_\ell:=\frac{\lambda_\ell}{\lambda_1},\qquad \ell\ge1,
\end{equation}
the model is degree-one-leading when $r_\ell\le1$ for every $\ell\ge2$.

\begin{mydef}[Critical coupling and transition type]\label{def:transition}
For a centered kernel $W$ satisfying the preceding assumptions, let
\begin{equation}\label{eq:MK-Kc}
        \M_K=\argmin_{\mu\in\Pc(\Sd)}\F_K(\mu),
        \qquad
        K_c=\sup\{K\ge0:\ \{\sg\}=\M_K\}.
\end{equation}
The transition is called \emph{continuous} if  $\mathcal{M}_{K_c}=\{\sg\}$ and, whenever $K_n\downarrow K_c$ and $\mu_n\in\M_{K_n}$, every weak limit point of $(\mu_n)$ equals $\sg$.  Otherwise, the transition is called \emph{discontinuous}.  
\end{mydef}

The following elementary lemma explains why the endpoint $K_\#$ is decisive.  If the uniform measure is the unique minimizer at the linear threshold, then the transition is continuous.  If a competitor already beats the uniform measure at that threshold, then global minimality must have been lost earlier.  This is the spherical analogue, in the present notation, of the transition-point criterion in \cite[Proposition 2.12]{ChayesPanferov2010}, \cite[Proposition~5.8]{CarrilloGvalaniPavliotisSchlichting2020}.

\begin{lemma}[Basic variational implications]\label{lem:Kc}
Let $W$ be continuous, symmetric, and centered.  Then $\F_K$ has a minimizer in $\Pc(\Sd)$ for every $K\ge0$.  Moreover:
\begin{enumerate}[(i)]
\item if $\sg$ (uniquely) minimizes $\F_K$, then $\sg$ (uniquely) minimizes $\F_{K'}$ for every $0\le K'\le K$;
\item if $\sg$ is the unique minimizer of $\F_{K_\#}$, then $K_c=K_\#$ and the transition is continuous;
\item if there exists $\mu\in\Pc(\Sd)$ such that $\F_{K_\#}(\mu)<\F_{K_\#}(\sg)$, then $K_c<K_\#$ and the transition is discontinuous.
\end{enumerate}
\end{lemma}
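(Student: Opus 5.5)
Existence comes from the direct method in the weak topology on $\Pc(\Sd)$, which is compact because $\Sd$ is compact. The entropy $\mu\mapsto\Ent(\mu\mid\sg)$ is weakly lower semicontinuous, for instance via the Donsker--Varadhan dual representation as a supremum of continuous linear functionals. The interaction term $\mu\mapsto\iint W\,d\mu\,d\mu$ is weakly continuous, since $W$ is continuous on the compact set $\Sd\times\Sd$ and $\mu_n\otimes\mu_n\rightharpoonup\mu\otimes\mu$. Hence $\F_K$ is weakly lower semicontinuous, bounded below by $-K\norm{W}_\infty$, and finite at $\sg$, where $\F_K(\sg)=0$ because $W$ is centered. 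A minimizing sequence therefore has a weak limit point, and that point is a minimizer.

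For (i), I use the gap identity in the form $\F_K(\mu)-\F_K(\sg)=\Ent(\mu\mid\sg)-K\,\mathcal{I}(\mu)$, where $\mathcal{I}(\mu)=\iint W\,d\mu\,d\mu$. This holds for all $\mu$: for finite entropy it is \eqref{eq:gap-identity-formal}, and otherwise both sides are $+\infty$. Suppose $\sg$ minimizes $\F_K$ and $0\le K'\le K$. If $\mathcal{I}(\mu)\le 0$, the gap at $K'$ is at least $\Ent\ge0$, and it vanishes only if $\mu=\sg$, by strictness of relative entropy. If $\mathcal{I}(\mu)>0$, then $\Ent-K'\mathcal{I}\ge \Ent-K\mathcal{I}\ge0$. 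For uniqueness at $K'$ when $\mu\neq\sg$: in the first case the gap is positive because $\Ent>0$; in the second case it is at least $\Ent-K\mathcal{I}$, which is strictly positive by uniqueness at $K$. So the set of $K$ at which $\sg$ is the unique minimizer is an interval containing $0$, with right endpoint $K_c$.

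For (ii), part (i) shows $\{\sg\}=\M_K$ for all $K\le K_\#$, so $K_c\ge K_\#$. For the reverse inequality, when $K>K_\#$ the second variation $1-2K\lambda_*$ is negative in a direction $f\in\Hh_\ell$ with $\lambda_\ell=\lambda_*$. The competitor $q_\ep=\qu+\ep f$ is a nonnegative density for small $\ep$, since $f$ is smooth, and it satisfies $\F_K(q_\ep\sg)<\F_K(\sg)$. Hence $K_c\le K_\#$, so $K_c=K_\#$ and $\M_{K_c}=\{\sg\}$.

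Continuity also requires that if $K_n\downarrow K_c$ and $\mu_n\in\M_{K_n}$, every weak limit point $\mu$ equals $\sg$. Since $\mu_n$ minimizes, $\F_{K_n}(\mu_n)\le\F_{K_n}(\sg)=0$. By lower semicontinuity of entropy, continuity of $\mathcal{I}$, and boundedness of $\mathcal{I}$, we get $\F_{K_c}(\mu)\le\liminf\F_{K_n}(\mu_n)\le 0$. So $\mu$ is a minimizer at $K_c$, and therefore $\mu=\sg$. The case $\lambda_*=0$, where $K_\#=\infty$, is trivial.

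For (iii), continuity of $K\mapsto\F_K(\mu)$, which is affine in $K$ since $\mathcal{I}(\mu)$ is finite, gives $\F_K(\mu)<0=\F_K(\sg)$ for all $K$ in a neighborhood of $K_\#$. Combined with (i), this yields $K_c<K_\#$. Discontinuity then follows because at $K_c<K_\#$ either $\M_{K_c}\ne\{\sg\}$, or $\M_{K_c}=\{\sg\}$ and we must rule out convergence of supercritical minimizers to $\sg$. In the latter case, for $K$ slightly above $K_c$ (still below $K_\#$), minimizers $\mu_K\ne\sg$ exist with $\F_K(\mu_K)\le0$. If these converged to $\sg$, I would use the strict linear stability at $K<K_\#$, namely a local coercivity $\Ent(q\sg\mid\sg)-K\ip{f}{T_Wf}\ge c\,\Ent$ near $\sg$, to contradict $\F_K(\mu_K)\le 0$ with $\mu_K\ne\sg$. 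I expect this local-coercivity step to be the main obstacle, because weak proximity to $\sg$ does not control $\Ent$. I would handle it through the finite-rank structure: only finitely many $\lambda_\ell$ exceed any fixed $\delta>0$, since $W$ is continuous and $T_W$ is compact. I would also try a Pinsker/Csisz\'ar-type comparison, $\Ent\ge\tfrac12\norm{f}^2$ locally, combined with weak convergence of the low modes $\Pi_\ell f$.
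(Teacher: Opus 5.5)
Your existence argument, part (i) (where you prove downward uniqueness directly instead of subtracting the inequalities at $K'$ and $K$, an equivalent variant), part (ii), and the inequality $K_c<K_\#$ in (iii) are correct and follow the paper. The gap is in the last step of (iii). When $\M_{K_c}=\{\sg\}$, you must show that minimizers $\mu_n\in\M_{K_n}\setminus\{\sg\}$ with $K_n\downarrow K_c$ cannot converge weakly to $\sg$, and you only describe a plan for this.

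Moreover, the comparison that plan rests on, $\Ent\ge\frac12\norm{f}_2^2$ ``locally,'' is false in every weak neighborhood of $\sg$, even among densities of small entropy. A finite-entropy density need not lie in $L^2$. For example, a bump of height $M$ on a set of measure $m/M$ has $\Ent\approx m\log M$ but $\norm{f}_2^2\approx mM$; take $m=(\log M)^{-2}$ and $M\to\infty$. Even when $f$ is uniformly small, one only gets $\Ent\ge\bigl(\frac12-C\norm{f}_\infty\bigr)\norm{f}_2^2$, because $(1+s)\log(1+s)-s<\frac12 s^2$ for $s>0$. So weak closeness plus a finite-rank truncation of $T_W$ does not, by itself, give the coercivity $\Ent-K\ip{f}{T_Wf}\ge c\,\Ent$ that you want.

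The missing idea is to use that the $\mu_n$ are minimizers, hence critical points. Their densities therefore solve the Kirkwood--Monroe equation $q_n=e^{2K_nT_Wf_n}/Z_n$ with $f_n=q_n-\qu$. Since $W$ is uniformly continuous on $\Sd\times\Sd$, weak convergence $\mu_n\rightharpoonup\sg$ gives $T_Wf_n(x)=\int W(x,y)\,d(\mu_n-\sg)(y)\to0$ uniformly in $x$. Hence $q_n\to1$ uniformly. Now fix $\bar K\in(K_c,K_\#)$ and use $\ip{f_n}{T_Wf_n}\le\lambda_*\norm{f_n}_2^2$ together with the Taylor bound above. This gives $\F_{K_n}(q_n\sg)\ge\bigl(\frac12(1-2\bar K\lambda_*)-C\norm{f_n}_\infty\bigr)\norm{f_n}_2^2>0$ for large $n$, which contradicts $\F_{K_n}(\mu_n)\le0$ with $\mu_n\ne\sg$. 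This is the paper's argument.

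Your route can also be rescued without the Euler--Lagrange equation, but it needs two facts you did not state. First, $\F_{K_n}(\mu_n)\le0$ already forces $\Ent(\mu_n)\le K_n\mathcal{I}(\mu_n)\to0$, which answers your worry about entropy control. Second, split $f=f\mathbf{1}_{\{f\le1\}}+f\mathbf{1}_{\{f>1\}}$. The squared $L^2$ norm of the first piece and the $L^1$ norm of the second are both bounded by $C\,\Ent$, and this is what makes the high-mode part of $\ip{f}{T_Wf}$ small compared to $\Ent$.
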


\begin{proof}
Existence follows from weak compactness of $\Pc(\Sd)$, lower semicontinuity of entropy, and continuity of the interaction energy under weak convergence.  Put
\begin{equation}\label{eq:centered-energy}
        \E(\mu)=\iint W\,d\mu d\mu-\iint W\,d\sg d\sg.
\end{equation}
By centering, the second term vanishes.  If $\sg$ minimizes $\F_K$, then
\begin{equation}\label{eq:minimizer-inequality-at-K}
        \Ent(\mu\mid\sg)-K\E(\mu)\ge0
        \qquad\text{for every }\mu\in\Pc(\Sd).
\end{equation}
If $\E(\mu)>0$, decreasing $K$ preserves \eqref{eq:minimizer-inequality-at-K}.  If $\E(\mu)\le0$, the inequality for smaller $K$ follows from nonnegativity of entropy.  The same argument also gives downward propagation of uniqueness. Suppose that
\(\sigma\) is the unique minimizer of \(\mathcal F_K\), let \(0\le K'<K\), and let
\(\mu\) minimize \(\mathcal F_{K'}\). Then since $\sigma$ also minimizes $\F_{K'}$, we have
\begin{align}
        H(\mu\mid\sigma)-K'\mathcal E(\mu)=0,
        \qquad
        H(\mu\mid\sigma)-K\mathcal E(\mu)\ge0 .
\end{align}
Subtracting gives \((K-K')\mathcal E(\mu)\le0\), hence \(\mathcal E(\mu)\le0\). The first
equality then gives \(H(\mu\mid\sigma)\le0\), so \(H(\mu\mid\sigma)=0\) and therefore
\(\mu=\sigma\). This proves (i).

For (ii), uniqueness at $K_\#$ gives $K_c\ge K_\#$.  If $K>K_\#$, the second variation is negative in a spherical-harmonic degree attaining $\lambda_*$, so $\sg$ is not a local minimizer and hence $K_c\le K_\#$.  Let $K_n\downarrow K_c$ and $\mu_n\in\M_{K_n}$.  Since
\begin{equation}\label{eq:limit-minimizer-ineq}
        \F_{K_n}(\mu_n)\le\F_{K_n}(\sg)=0,
\end{equation}
while $K_n\to K_c$ and the interaction energy is continuous under weak convergence, every weak limit point $\mu$ of $(\mu_n)$ satisfies $\F_{K_c}(\mu)\le0$.  Because $K_c=K_\#$ and $\sg$ is the unique minimizer at $K_\#$, the limit point is $\sg$.

For (iii), strict inequality at $K_\#$ implies $\E(\mu)>0$, since otherwise the gap would be at least $\Ent(\mu\mid\sg)\ge0$.  Therefore, $K\mapsto \F_K(\mu)-\F_K(\sg)$ remains negative for all $K<K_\#$ sufficiently close to $K_\#$, and $\sg$ is not minimizing for such $K$, implying $K_c<K_\#$. If \(K_c<K_\#\), then either \(\mathcal M_{K_c}\ne\{\sigma\}\), in which case the transition is discontinuous by definition, or \(\mathcal M_{K_c}=\{\sigma\}\). In the latter case, supercritical minimizers cannot converge weakly to \(\sigma\). Indeed, choosing \(\bar K\in(K_c,K_\#)\), the Hessian at \(\sigma\) is uniformly positive for \(K\le \bar K\). Suppose that there is a sequence $\mu_n\to\sigma$ weakly, where each $\mu_n$ is a minimizer of $\F_{K_n}$ for some $K_n\to K_c$. The Kirkwood--Monroe equation upgrades weak convergence of minimizers to uniform convergence of their densities to \(1\), contradicting the strict local stability unless the $\mu_n$ are identically \(\sigma\). Hence, the convergence condition in \Cref{def:transition} fails.
\end{proof}

We next record the critical-point uniqueness mechanism.  This is the generalization of the $K_c/2$ uniqueness statement in the circular model; the proof proceeds by the same argument, mutatis mutandis.  It uses only the variational threshold and the Kirkwood--Monroe equation, not the specific harmonic analysis of the transformer kernel.

A critical point will mean a probability density solving the Euler--Lagrange, or Kirkwood--Monroe, equation associated with \eqref{eq:general-free-energy}.  More precisely, $q\sg$ is a critical point at coupling $K$ if
\begin{equation}\label{eq:KM-general}
        q(x)=\frac{\exp\{2K T_W(q-\qu)(x)\}}{\int_{\Sd}\exp\{2K T_W(q-\qu)(z)\}\,d\sg(z)}.
\end{equation}
Because $T_W\qu=0$ by centering, this is equivalent to writing the equation with $T_Wq$; the form above keeps the centered perturbation visible.
Since $W$ is continuous on the compact space $\Sd\times\Sd$, it is bounded; hence every solution of \eqref{eq:KM-general} is bounded above and below by positive constants.

\begin{prop}[Critical-point uniqueness]\label{prop:critical-uniqueness-half}
Let $W$ be continuous, symmetric, centered, and zonal, and assume that $K_c<\infty$.  If $q\sg$ is a critical point of $\F_K$ and
\begin{equation}\label{eq:half-critical-range}
        0\le K\le \frac12K_c,
\end{equation}
then $q=\qu$.
\end{prop}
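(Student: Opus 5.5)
The plan is to combine the Kirkwood--Monroe equation with the variational inequality at $K_c$. The key input is that $\sg$ minimizes $\F_{K_c}$. For $K<K_c$ this holds because $\sg$ uniquely minimizes $\F_K$, and at $K_c$ it follows by passing to the limit in $\Ent(\mu\mid\sg)-K\E(\mu)\ge0$. Hence for every density $p$ with $f=p-\qu$,
\begin{equation*}
        \Ent(p\sg\mid\sg)\ge K_c\ip{f}{T_Wf}.
\end{equation*}
Let $q$ be a critical point with $K\le K_c/2$, and set $f=q-\qu$ and $\phi=2KT_Wf$. By \eqref{eq:KM-general}, $\log q=\phi-\log Z$, where $Z=\int e^{\phi}\,d\sg$.

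\textbf{Step 1 (entropy identity).} Compute
\begin{equation*}
        \Ent(q\sg\mid\sg)=\int q\log q\,d\sg=\int \phi\, q\,d\sg-\log Z=2K\ip{f}{T_Wf}-\log Z.
\end{equation*}
Here we used $\int\phi\,d\sg=0$, which holds by centering and symmetry, so that $\int\phi q=\int\phi f$. Jensen gives $\log Z\ge\int\phi\,d\sg=0$, and therefore
\begin{equation*}
        \Ent(q\sg\mid\sg)\le 2K\ip{f}{T_Wf},
\end{equation*}
with equality iff $\phi$ is constant.

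\textbf{Step 2 (a second identity).} We also use the reverse relative entropy. Since
\begin{equation*}
        \Ent(\sg\mid q\sg)=-\int\log q\,d\sg=\log Z,
\end{equation*}
adding this to Step 1 gives the symmetrized identity
\begin{equation*}
        \int (q-1)\log q\,d\sg=2K\ip{f}{T_Wf}.
\end{equation*}

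\textbf{Step 3 (combining).} From the variational inequality at $K_c$ and Step 1,
\begin{equation*}
        K_c\ip{f}{T_Wf}\le\Ent(q\sg\mid\sg)\le 2K\ip{f}{T_Wf}\le K_c\ip{f}{T_Wf}.
\end{equation*}
Suppose first that $\ip{f}{T_Wf}>0$. If $2K<K_c$, the chain forces $\ip{f}{T_Wf}\le0$, a contradiction. If $2K=K_c$, all inequalities are equalities, so $\log Z=0$. The Jensen equality case then makes $\phi$ constant, so $q$ is constant and hence $q=\qu$. Suppose instead that $\ip{f}{T_Wf}\le 0$. Then Step 1 gives $\Ent(q\sg\mid\sg)\le0$, so $q=\qu$ directly. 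In either case $q=\qu$.

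\textbf{Main obstacle.} The delicate points are the endpoint $K=K_c/2$ and justifying the inequality at $K=K_c$ itself. The latter is a limit of the inequalities for $K<K_c$, which requires $K_c>0$. If $K_c=0$ then $K=0$, and $q=\qu$ is immediate from \eqref{eq:KM-general}. Boundedness of $q$ above and below by positive constants makes every integral finite, so the manipulations in Steps 1--3 are rigorous.
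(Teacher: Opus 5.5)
Your proof is correct and follows the paper's argument. It uses the same three ingredients: minimality of $\sg$ at $K_c$ via a limit from below, the Kirkwood--Monroe identity $\Ent(q\sg\mid\sg)+\Ent(\sg\mid q\sg)=2K\ip{f}{T_Wf}$ (your $\log Z$ is exactly $\Ent(\sg\mid q\sg)$), and the chain $2K\ip{f}{T_Wf}\le K_c\ip{f}{T_Wf}\le\Ent(q\sg\mid\sg)$, which forces $\Ent(\sg\mid q\sg)=0$. Your sign case split and Jensen equality step just unpack two observations in the paper: the symmetrized identity already gives $\ip{f}{T_Wf}\ge0$, and vanishing reverse entropy forces $q=\qu$.
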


\begin{proof}
For $K=0$, \eqref{eq:KM-general} immediately gives $q=\qu$.  Assume $K>0$.  Because $K_c$ is finite and the set of $K$ for which $\sg$ minimizes is an interval by \cref{lem:Kc}(i), passing to the limit along $K_n\uparrow K_c$ with $\sg\in\M_{K_n}$ gives
\begin{equation}\label{eq:sigma-minimizes-at-Kc}
        \F_{K_c}(\mu)-\F_{K_c}(\sg)\ge0
        \qquad\text{for every }\mu\in\Pc(\Sd).
\end{equation}
Let $f=q-\qu$.  From \eqref{eq:KM-general}, we have
\begin{equation}\label{eq:log-critical-density}
        \log q=2K T_Wf-\log Z,
        \qquad
        Z=\int_{\Sd}e^{2K T_Wf}\,d\sg.
\end{equation}
Using centering, we obtain the entropy identity
\begin{align}
        \Ent(q\sg\mid\sg)+\Ent(\sg\mid q\sg)
        &=\int_{\Sd}q\log q\,d\sg-\int_{\Sd}\log q\,d\sg \notag\\
        &=2K\ip{f}{T_Wf}.
        \label{eq:critical-entropy-identity}
\end{align}
The second entropy in \eqref{eq:critical-entropy-identity} is finite because $q$ is bounded below.  Since $\sg$ minimizes at $K_c$, \eqref{eq:sigma-minimizes-at-Kc} applied to $q\sg$ gives
\begin{equation}\label{eq:Kc-energy-entropy-bound}
        K_c\ip{f}{T_Wf}\le \Ent(q\sg\mid\sg).
\end{equation}
The left-hand side of \eqref{eq:critical-entropy-identity} is nonnegative, so $\ip{f}{T_Wf}\ge0$.  If $K\le K_c/2$, then \eqref{eq:critical-entropy-identity} and \eqref{eq:Kc-energy-entropy-bound} imply
\begin{equation}\label{eq:half-critical-final-bound}
        \Ent(q\sg\mid\sg)+\Ent(\sg\mid q\sg)
        =2K\ip{f}{T_Wf}
        \le K_c\ip{f}{T_Wf}
        \le \Ent(q\sg\mid\sg).
\end{equation}
Thus, $\Ent(\sg\mid q\sg)\le0$.  Since relative entropy is nonnegative, $\Ent(\sg\mid q\sg)=0$, and therefore $q=\qu$.
\end{proof}

\begin{remark}[Open uniqueness range]
\Cref{prop:critical-uniqueness-half} leaves open the range
\begin{equation}\label{eq:half-to-full-range}
        \frac12K_c<K\le K_c.
\end{equation}
We tentatively conjecture that, whenever the transition is continuous in the sense of \cref{def:transition}, $\sigma$ is the unique critical point in this omitted range.
\end{remark}

\section{The Beckner--Onofri coercivity mechanism}\label{sec:beckner}

We now prove the coercive entropy estimate that replaces the Lebedev--Milin inequality in the circular proof.  The proof uses two preliminary inputs: the dual Beckner--Onofri/logarithmic HLS inequality and the equality-case information needed for endpoint uniqueness. 

The constants used below are the Beckner weights, namely the harmonic weights appearing in the dual Beckner--Onofri/logarithmic HLS inequality.
Define
\begin{equation}\label{eq:beckner-weights}
        b_{\ell,d}=\frac{\Gamma(\ell)\Gamma(d)}{\Gamma(\ell+d-1)}
        =\prod_{j=1}^{\ell-1}\frac{j}{d+j-1},
        \qquad \ell\ge1,
\end{equation}
where the empty product is, by convention, one.  In particular,
\begin{equation}\label{eq:first-beckner-weights}
        b_{1,d}=1,
        \qquad
        b_{2,d}=\frac1d,
        \qquad
        b_{3,d}=\frac{2}{d(d+1)}.
\end{equation}
The sharp Beckner--Onofri inequality on $\Sd$ \cite{Beckner1993}, equivalently the sharp logarithmic HLS inequality and equality-case classification of Carlen--Loss \cite{CarlenLoss1992}, gives the following dual form.

\begin{prop}[Dual Beckner--Onofri/logarithmic HLS inequality]\label{prop:dualBO}
Let $\mu=q\sg$ have finite entropy.  Then
\begin{equation}\label{eq:dualBO}
        \Ent(q\sg\mid\sg)
        \ge \frac12\sum_{\ell=1}^\infty b_{\ell,d}\norm{\Pi_\ell(q-\qu)}_2^2,
\end{equation}
where the projections are understood distributionally.  Equality holds if and only if
\begin{equation}\label{eq:conformal-jacobian}
        q(x)=J_a(x):=\left(\frac{1-|a|^2}{|x-a|^2}\right)^{d-1},
        \qquad a\in\R^d,
        \qquad |a|<1.
\end{equation}
In particular, $J_0\equiv1$.
\end{prop}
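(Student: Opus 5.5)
The plan is to derive \eqref{eq:dualBO} by Legendre duality from Beckner's sharp inequality on $\Sd$ (for $n=d-1$),
\begin{equation*}
        \log\int_{\Sd}e^{\varphi}\,d\sg
        \le \int_{\Sd}\varphi\,d\sg+\frac12\sum_{\ell\ge1}b_{\ell,d}^{-1}\norm{\Pi_\ell\varphi}_2^2,
\end{equation*}
whose quadratic form is the $n$-th order Paneitz-type operator with eigenvalues $\Gamma(\ell+d-1)/(\Gamma(\ell)\Gamma(d))=b_{\ell,d}^{-1}$, normalized so that it acts as $1$ on $\Hh_1$. We take this inequality and its equality cases, namely $\varphi=\log J_a+\mathrm{const}$, from \cite{Beckner1993,CarlenLoss1992}.

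First, for $q$ with finite entropy and smooth $\varphi$, the Gibbs variational principle gives
\begin{equation*}
        \Ent(q\sg\mid\sg)\ge\int\varphi q\,d\sg-\log\int e^\varphi\,d\sg.
\end{equation*}
Combining this with Beckner's inequality and writing $f=q-\qu$ yields
\begin{equation*}
        \Ent\ge\sum_{\ell\ge1}\Bigl(\ip{\Pi_\ell\varphi}{\Pi_\ell f}-\tfrac12 b_{\ell,d}^{-1}\norm{\Pi_\ell\varphi}_2^2\Bigr).
\end{equation*}
Here the $\ell=0$ terms cancel because $\int f\,d\sg=0$. Choosing $\varphi=\sum_{\ell=1}^{L}b_{\ell,d}\Pi_\ell f$, a finite sum of smooth harmonics (so that $\Pi_\ell f$ may be taken distributionally), gives
\begin{equation*}
        \Ent\ge\frac12\sum_{\ell\le L}b_{\ell,d}\norm{\Pi_\ell f}_2^2.
\end{equation*}
Letting $L\to\infty$ by monotone convergence proves \eqref{eq:dualBO}. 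Using only finitely many modes keeps $\varphi$ smooth and sidesteps any question of whether $f\in L^2$.

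For the equality case, sufficiency follows by pulling back along the conformal map $\Phi_a$ with Jacobian $J_a$. The log-HLS functional of Carlen--Loss is conformally invariant, and the dual bound is its second-variation-free rewriting: $\frac12\sum b_{\ell,d}\norm{\Pi_\ell f}_2^2$ equals the log-HLS energy $\frac{d-1}{2}\iint\log\frac{1}{|x-y|}\,(q(x)-1)(q(y)-1)\,d\sg\,d\sg$ up to normalization, since the Funk--Hecke coefficients of $-\log|x-y|$ are proportional to $b_{\ell,d}$. So \eqref{eq:dualBO} is exactly the Carlen--Loss inequality, and its extremals are the $J_a$. This normalization identification is the step to check.

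Necessity goes through Carlen--Loss's classification of log-HLS extremals as conformal images of the uniform density, i.e.\ the densities $J_a$. This requires knowing that the duality derivation reproduces log-HLS with the same constant, not merely a weaker inequality. I expect that constant-matching, via the Funk--Hecke computation $\int\log|e_0-y|^{-1}Y_\ell\,d\sg\propto 1/(\ell(\ell+d-2))\cdot\ldots$, to be the main obstacle. The fallback is to cite \cite{CarlenLoss1992} directly for both the inequality in this spectral form and its equality cases.
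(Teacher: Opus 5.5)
Your argument is the paper's proof: Gibbs/Donsker--Varadhan duality combined with Beckner's sharp inequality, the finite harmonic test function $\varphi=\sum_{\ell\le L}b_{\ell,d}\Pi_\ell(q-1)$, monotone convergence in $L$, and equality cases taken from Beckner and Carlen--Loss, which the paper simply cites. Your log-HLS identification also goes through with matching sharp constants, except that the prefactor is $d-1$ rather than $\tfrac{d-1}{2}$, because the degree-$\ell$ Funk--Hecke coefficient of $\log|x-y|^{-1}$ with respect to normalized measure is $\tfrac{1}{2(d-1)}b_{\ell,d}$.
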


\begin{proof}
For every bounded measurable test function $\phi$, the Donsker--Varadhan variational formula for relative entropy \cite{DonskerVaradhan1975} gives
\begin{equation}\label{eq:dv}
        \Ent(q\sg\mid\sg)
        \ge \int_{\Sd}\phi(q-\qu)\,d\sg
        -\left(\log\int_{\Sd}e^\phi\,d\sg-\int_{\Sd}\phi\,d\sg\right).
\end{equation}
The primal Beckner--Onofri inequality states that
\begin{equation}\label{eq:primalBO}
        \log\int_{\Sd}e^\phi\,d\sg-\int_{\Sd}\phi\,d\sg
        \le \frac12\sum_{\ell\ge1} b_{\ell,d}^{-1}\norm{\Pi_\ell\phi}_2^2
\end{equation}
for smooth $\phi$, with the usual extension by approximation.  
Apply \eqref{eq:primalBO} to a finite harmonic polynomial $\phi_N=\sum_{\ell=1}^N\phi_\ell$, $\phi_\ell\in\Hh_\ell$.  Combining \eqref{eq:dv} and \eqref{eq:primalBO} gives
\begin{align}
        \Ent(q\sg\mid\sg)
        \ge{}&
        \sum_{\ell=1}^N\ip{\phi_\ell}{\Pi_\ell(q-\qu)}
        -\frac12\sum_{\ell=1}^N b_{\ell,d}^{-1}\norm{\phi_\ell}_2^2.
\end{align}
The right-hand side is a finite-dimensional concave quadratic expression.  Optimizing independently in each $\phi_\ell$ gives the maximizer $\phi_\ell=b_{\ell,d}\Pi_\ell(q-\qu)$ and contributes $\frac12 b_{\ell,d}\norm{\Pi_\ell(q-\qu)}_2^2$ in degree $\ell$.  Thus,
\begin{equation}\label{eq:finite-dualBO}
        \Ent(q\sg\mid\sg)
        \ge \frac12\sum_{\ell=1}^N b_{\ell,d}\norm{\Pi_\ell(q-\qu)}_2^2.
\end{equation}
Letting $N\to\infty$ proves \eqref{eq:dualBO}.  The stated equality cases are precisely the equality cases in the sharp Beckner--Onofri/logarithmic HLS theorem.
\end{proof}

\begin{remark}\label{rem:lebedev-milin}
For $d=2$, the conformal Jacobians in \eqref{eq:conformal-jacobian} reduce to Poisson kernels on the circle, and the weights are $b_{\ell,2}=\frac1\ell$. 
This is the same spectral scale as the dual entropy estimate in \cite[Proposition~2.2]{MunRosenzweig2026Circle}, derived there from the Lebedev--Milin inequality.  In dimensions $d\ge3$, \cref{prop:dualBO} is the conformally sharp spherical substitute: Fourier modes are replaced by spherical harmonics, and the coefficients $1/\ell$ are replaced by \eqref{eq:beckner-weights}.
\end{remark}

For endpoint uniqueness, equality in \cref{prop:dualBO} leaves the conformal Jacobian densities; later strictness in a particular harmonic degree rules out the nonconstant ones only if every such density has a nonzero projection in that degree.
\begin{lemma}\label{lem:nonzero-conformal}
If $a\ne0$ in \eqref{eq:conformal-jacobian}, then
\begin{equation}\label{eq:nonzero-projections}
        \Pi_\ell(J_a-\qu)\ne0
        \qquad\text{for every }\ell\ge1.
\end{equation}
\end{lemma}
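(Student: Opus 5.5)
Because $J_a$ is invariant under rotations fixing the axis $e=a/|a|$, each projection $\Pi_\ell(J_a-\qu)$ is a zonal harmonic about $e$. Hence $\Pi_\ell(J_a-\qu)=c_\ell\,Z_{\ell,d}(e,\cdot)$ for a scalar $c_\ell$, since the zonal part of $\Hh_\ell$ is one-dimensional. The reproducing property \eqref{eq:reproducing-kernel} identifies this scalar:
\begin{equation*}
c_\ell\,Z_{\ell,d}(e,e)=\Pi_\ell J_a(e)=\int_{\Sd}Z_{\ell,d}(e,y)J_a(y)\,d\sg(y),\qquad \ell\ge1,
\end{equation*}
where we used that $\Pi_\ell\qu=0$ for $\ell\ge1$. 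Since $Z_{\ell,d}(e,e)=\dim\Hh_\ell>0$, the claim is equivalent to showing that the integral on the right never vanishes.

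Write $a=re$ with $0<r<1$ and $t=e\cdot y$. Then $|y-a|^2=1-2rt+r^2$, so
\begin{equation*}
J_a(y)=(1-r^2)^{d-1}(1-2rt+r^2)^{-(d-1)}.
\end{equation*}
This is, up to the prefactor, the generating function of the Gegenbauer polynomials with index $d-1$:
\begin{equation*}
(1-2rt+r^2)^{-(d-1)}=\sum_{n\ge0}C_n^{(d-1)}(t)\,r^n.
\end{equation*}
The harmonic projections, however, are governed by Gegenbauer polynomials of index $\lambda=(d-2)/2$, because $Z_{\ell,d}(e,y)$ is a positive multiple of $C_\ell^{(\lambda)}(t)$ (for $d=2$, use Chebyshev polynomials $\cos\ell\theta$). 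So we must expand in the correct family. The plan is to use the connection formula
\begin{equation*}
C_n^{(\mu)}(t)=\sum_{k=0}^{\lfloor n/2\rfloor}\gamma_{n,k}\,C_{n-2k}^{(\lambda)}(t),\qquad \mu=d-1>\lambda,
\end{equation*}
whose coefficients $\gamma_{n,k}=\frac{(\lambda+n-2k)\,\Gamma(\lambda)}{\Gamma(\mu)\Gamma(\mu-\lambda)}\cdot\frac{\Gamma(k+\mu-\lambda)\,\Gamma(n-k+\mu)}{k!\,\Gamma(n-k+\lambda+1)}$ are all strictly positive when $\mu>\lambda>0$. Substituting the connection formula into the generating-function expansion gives
\begin{equation*}
J_a=(1-r^2)^{d-1}\sum_{\ell}\Bigl(\sum_{k\ge0}\gamma_{\ell+2k,k}\,r^{\ell+2k}\Bigr)C_\ell^{(\lambda)}(t).
\end{equation*}
Every term in the inner sum is positive, and the $k=0$ term alone is at least $\gamma_{\ell,0}r^\ell>0$. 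Therefore the coefficient of $C_\ell^{(\lambda)}$ is strictly positive for every $\ell\ge1$, so $c_\ell\ne0$ and $\Pi_\ell(J_a-\qu)\ne0$.

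The series converges absolutely and uniformly in $t\in[-1,1]$ for $r<1$, because $|C_n^{(\mu)}(t)|\le C_n^{(\mu)}(1)$, which grows polynomially in $n$. This justifies both the rearrangement and the termwise projection. The case $d=2$ needs separate treatment: there $\lambda=0$ and $J_a$ is the Poisson kernel, $\frac{1-r^2}{1-2r\cos\theta+r^2}=1+2\sum_{\ell\ge1}r^\ell\cos\ell\theta$, so $\Pi_\ell(J_a-\qu)$ is directly seen to be nonzero, with coefficient $2r^\ell$.

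The main obstacle is pinning down positivity of the connection coefficients. If that formula is inconvenient to cite, the fallback is to compute the integral directly. By Rodrigues' formula for $C_\ell^{(\lambda)}$ with weight $(1-t^2)^{(d-3)/2}$, integrate by parts $\ell$ times. Each derivative of $(1-2rt+r^2)^{-(d-1)}$ contributes a factor $2r(d-1)(d)\cdots$ and preserves positivity, so the integral equals a positive constant times $r^\ell\int(1-t^2)^{\ell+(d-3)/2}(1-2rt+r^2)^{-(d-1)-\ell}\,dt$, which is strictly positive.
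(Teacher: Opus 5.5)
Your proof is correct, and your fallback paragraph is exactly the paper's argument. The paper rotates to $a=re$ and treats $d=2$ by the Poisson-kernel expansion. For $d\ge3$ it uses Rodrigues' formula for $C_\ell^{\alpha}$, $\alpha=(d-2)/2$, and integrates by parts $\ell$ times, which writes the degree-$\ell$ zonal coefficient as a positive multiple of $\int_{-1}^{1}(1-2rt+r^2)^{-(d-1)-\ell}(1-t^2)^{\ell+\alpha-1/2}\,dt>0$.

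Your primary route is genuinely different. You expand $J_a$ with the generating function of index $\mu=d-1$ and pass to the harmonic index $\lambda=(d-2)/2$ via the Gegenbauer connection formula. Your $\gamma_{n,k}$ is the DLMF \S18.18 formula rewritten with Gamma functions. Nonvanishing then reduces to positivity of connection coefficients when $\mu>\lambda>0$.

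This gives more than the lemma asks for: the full harmonic expansion of $J_a$. The coefficient of $C_\ell^{(\lambda)}(e\cdot y)$ is the explicit positive series $(1-r^2)^{d-1}\sum_{k\ge0}\gamma_{\ell+2k,k}r^{\ell+2k}=\gamma_{\ell,0}r^\ell(1+O(r^2))$. The price is an external special-function identity plus a convergence step. The paper's route needs only Rodrigues' formula, with boundary terms vanishing since $\ell+\alpha-1/2>\ell-1$, and shows positivity in a single integral.

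One small point on convergence. Your bound $|C_n^{(\mu)}(t)|\le C_n^{(\mu)}(1)$ controls only the outer series in $n$. That is enough if you project termwise, because $C_n^{(\mu)}(e\cdot y)$ is a polynomial whose $\Hh_\ell$-component is $\gamma_{n,k}C_\ell^{(\lambda)}(e\cdot y)$ when $n=\ell+2k$ and zero otherwise; no double-series rearrangement is then needed. If you do rearrange, dominate the double series using $|C_m^{(\lambda)}(t)|\le C_m^{(\lambda)}(1)$ and $\gamma_{n,k}>0$. This bounds it by $\sum_n r^nC_n^{(\mu)}(1)=(1-r)^{-2(d-1)}$.
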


\begin{proof}
By rotation invariance, take $a=re_d$, where $e_d$ is the last coordinate vector in $\R^d$, with $0<r<1$, and put $t=x\cdot e_d$.

If $d=2$, writing $x=e^{i\theta}$ gives the Poisson-kernel expansion
\begin{equation}\label{eq:poisson-expansion}
        J_a(e^{i\theta})=\frac{1-r^2}{1-2r\cos\theta+r^2}
        =1+2\sum_{\ell=1}^\infty r^\ell\cos(\ell\theta),
\end{equation}
so every nonzero Fourier mode appears.

Assume now that $d\ge3$ and set $\alpha=(d-2)/2$.  We use the standard Gegenbauer/ultraspherical normalization of \cite[Section~18.5]{NISTDLMF}.  The zonal harmonic of degree $\ell$ is represented by the Gegenbauer polynomial $C_\ell^\alpha$.  For this normalization, Rodrigues' formula \cite[Table~18.5.1]{NISTDLMF} gives, up to a positive $\ell,d$-dependent constant,
\begin{equation}\label{eq:gegenbauer-rodrigues}
        (1-t^2)^{\alpha-1/2}C_\ell^\alpha(t)
        =(-1)^\ell\frac{d^\ell}{dt^\ell}
        \left[(1-t^2)^{\ell+\alpha-1/2}\right].
\end{equation}
For
\begin{equation}\label{eq:conformal-zonal-profile}
        F_r(t)=\left(\frac{1-r^2}{1-2rt+r^2}\right)^{d-1},
\end{equation}
integrating by parts $\ell$ times shows that the degree-$\ell$ zonal coefficient of $F_r$ is a positive constant times
\begin{equation}\label{eq:positive-zonal-coeff}
        \int_{-1}^{1}
        (2r)^\ell(d-1)_\ell(1-2rt+r^2)^{-(d-1)-\ell}
        (1-t^2)^{\ell+\alpha-1/2}\,dt.
\end{equation}
The integral in \eqref{eq:positive-zonal-coeff} is strictly positive.  Hence, the projection onto $\Hh_\ell$ is nonzero for every $\ell\ge1$.
\end{proof}

The Beckner--Onofri inequality controls harmonic projections of finite-entropy densities, which need not belong to $L^2$.  We now combine it with the heat-kernel regularization of the interaction to prove the continuity criterion that will be applied to the noisy-transformer eigenvalues.

\begin{remark}\label{rem:ordinary-spectral-convergence}
For a finite-entropy density $q$, the heat-regularized formulation in the proof below may be replaced by ordinary spectral sums whenever
\begin{equation}
        \sum_{\ell=1}^\infty |\lambda_\ell|\norm{\Pi_\ell(q-\qu)}_2^2<\infty.
\end{equation}
One sufficient condition is that $W$ be positive semidefinite on the centered subspace, equivalently $\lambda_\ell\ge0$ for every $\ell\ge1$.  In that case, monotone convergence applied to the heat-regularized identity gives
\begin{equation}\label{eq:positive-definite-ordinary-series}
        \iint_{\Sd\times\Sd} W(x,y)(q-\qu)(x)(q-\qu)(y)\,d\sg(x)d\sg(y)
        =\sum_{\ell=1}^\infty \lambda_\ell
        \norm{\Pi_\ell(q-\qu)}_2^2<\infty.
\end{equation}
More generally, the same conclusion holds if $W=W_+-W_-$, where $W_+$ and $W_-$ are continuous, symmetric, zonal, positive-semidefinite kernels.  Their degree-zero modes are irrelevant in the centered pairing with $q-\qu$.  If $\lambda_\ell^\pm\ge0$ denote their Funk--Hecke eigenvalues, then applying \eqref{eq:positive-definite-ordinary-series} separately to $W_+$ and $W_-$ yields
\begin{equation}\label{eq:absolute-convergence-positive-negative-parts}
        \sum_{\ell=1}^\infty |\lambda_\ell|
        \norm{\Pi_\ell(q-\qu)}_2^2
        \le
        \sum_{\ell=1}^\infty (\lambda_\ell^+ + \lambda_\ell^-)
        \norm{\Pi_\ell(q-\qu)}_2^2
        <\infty.
\end{equation}
Thus, in this situation, all heat-regularized interaction sums in the proof can be read as ordinary absolutely convergent series.  This additional hypothesis is not automatic for a general continuous sign-indefinite kernel: its positive and negative spectral parts need not themselves be represented by continuous kernels.
\end{remark}

\begin{thm}[Beckner--Onofri continuity criterion]\label{thm:BOcriterion}
Let $W$ be continuous, symmetric, centered, and zonal.  Assume $\lambda_1>0$, {let $K_1$, $K_\#$, and $r_\ell$ be as in} \eqref{eq:Kone-ratios-general} {and} \eqref{eq:Ksharp-general}, and assume that {$K_\#=K_1$}.  If
\begin{equation}\label{eq:BOcriterion}
        r_\ell\le b_{\ell,d}
        \qquad {\forall}\,\ell\ge2,
\end{equation}
with strict inequality for at least one degree $\ell\ge2$, then $K_c=K_\#$, the transition is continuous, and $\sg$ is the unique global minimizer of $\F_K$ for every $0\le K\le K_\#$.
\end{thm}

\begin{proof}
It suffices to consider probability densities $q$ with finite entropy, since otherwise the free-energy gap is $+\infty$.  Let $P_t=e^{t\Delta_{\Sd}}$ be the spherical heat semigroup (see \cite[Chapters~2--3]{AtkinsonHan2012} for the spherical-harmonic and Laplace--Beltrami background behind the heat-semigroup properties used here).  Set
\begin{equation}\label{eq:heat-regularized-kernel}
        W_t=P_t^xP_t^yW.
\end{equation}
Then $W_t$ is smooth, centered, and zonal, with Funk--Hecke eigenvalues $e^{-2ta_{\ell,d}}\lambda_\ell$, and $W_t\to W$ uniformly as $t\downarrow0$.  Moreover, $P_t(q-\qu)$ is smooth and $\Pi_\ell P_t(q-\qu)=e^{-ta_{\ell,d}}\Pi_\ell(q-\qu)$ for every $t>0$, and
\begin{equation}\label{eq:heat-pushed-interaction}
        \iint_{\Sd\times\Sd}W_t(x,y)(q-\qu)(x)(q-\qu)(y)\,d\sg(x)d\sg(y)
        =\ip{P_t(q-\qu)}{T_WP_t(q-\qu)}.
\end{equation}
The usual $L^2$ spectral identity therefore gives
\begin{align}
        &\iint_{\Sd\times\Sd} W_t(x,y)(q-\qu)(x)(q-\qu)(y)\,d\sg(x)d\sg(y) \notag\\
        &\qquad =
        \sum_{\ell=1}^\infty e^{-2ta_{\ell,d}}\lambda_\ell
        \norm{\Pi_\ell(q-\qu)}_2^2.
        \label{eq:heat-spectral-identity}
\end{align}
Equivalently, the right-hand side is the heat-kernel, or Abel, regularization of the formal interaction series.  Passing $t\downarrow0$ in the left-hand side is justified by the uniform convergence $W_t\to W$ and gives
\begin{align}
        &\iint_{\Sd\times\Sd} W(x,y)(q-\qu)(x)(q-\qu)(y)\,d\sg(x)d\sg(y) \notag\\
        &\qquad =
        \lim_{t\downarrow0}
        \sum_{\ell=1}^\infty e^{-2ta_{\ell,d}}\lambda_\ell
        \norm{\Pi_\ell(q-\qu)}_2^2.
        \label{eq:heat-spectral-limit}
\end{align}
Combining \eqref{eq:heat-spectral-limit} with \cref{prop:dualBO}, we obtain, for every finite $K$,
\begin{align}
        \F_K(q\sg)-\F_K(\sg)
        ={}&
        \left[
        \Ent(q\sg\mid\sg)
        -\frac12\sum_{\ell=1}^\infty
        b_{\ell,d}\norm{\Pi_\ell(q-\qu)}_2^2
        \right] \notag\\
        &+\frac12\lim_{t\downarrow0}
        \sum_{\ell=1}^\infty
        \left(b_{\ell,d}-2K e^{-2ta_{\ell,d}}\lambda_\ell\right)
        \norm{\Pi_\ell(q-\qu)}_2^2.
        \label{eq:heat-gap-decomposition}
\end{align}
The first term on the right-hand side is finite by \cref{prop:dualBO}. The limit in the second term exists because it is equal to
\begin{equation}\label{eq:finite-heat-limit}
        \sum_{\ell=1}^\infty b_{\ell,d}\norm{\Pi_\ell(q-\qu)}_2^2
        -2K
        \iint_{\Sd\times\Sd} W(x,y)(q-\qu)(x)(q-\qu)(y)\,d\sg(x)d\sg(y),
\end{equation}
and the interaction integral is finite because $W$ is bounded and $q-\qu\in L^1(\sg)$.

We next verify the coefficient comparison.  {If $K\le K_\#=K_1$, then}
\begin{equation}\label{eq:spectral-majorization}
        K\lambda_\ell\le\frac12 b_{\ell,d}
        \qquad\forall \ell\ge1.
\end{equation}
Indeed, in degree one,
\begin{equation}\label{eq:degree-one-majorization}
        K\lambda_1\le {K_1}\lambda_1=\frac12=\frac12 b_{1,d}.
\end{equation}
For $\ell\ge2$, split according to the sign of $\lambda_\ell$.  If $\lambda_\ell\ge0$, then
\begin{equation}\label{eq:higher-degree-majorization-positive}
        K\lambda_\ell\le {K_1}\lambda_\ell=\frac12 r_\ell\le\frac12b_{\ell,d},
\end{equation}
where the final inequality is by our assumption \eqref{eq:BOcriterion}. If $\lambda_\ell<0$, then trivially
\begin{equation}\label{eq:higher-degree-majorization-negative}
        K\lambda_\ell\le 0\le\frac12b_{\ell,d}.
\end{equation}

Under \eqref{eq:spectral-majorization}, the last term in \eqref{eq:heat-gap-decomposition} is an ordinary nonnegative series.  Indeed,
\begin{equation}\label{eq:regularized-coefficient-split}
        b_{\ell,d}-2K e^{-2ta_{\ell,d}}\lambda_\ell
        =e^{-2ta_{\ell,d}}\left(b_{\ell,d}-2K\lambda_\ell\right)
        +\left(1-e^{-2ta_{\ell,d}}\right)b_{\ell,d}.
\end{equation}
As $t\downarrow0$, the first term on the right-hand side of \eqref{eq:regularized-coefficient-split} increases monotonically to $b_{\ell,d}-2K\lambda_\ell$, while the second term tends to zero and is dominated by $b_{\ell,d}$.  Therefore, by monotone convergence for the first series and dominated convergence for the second, using the Beckner--Onofri bound,
\begin{align}
        \F_K(q\sg)-\F_K(\sg)
        ={}&
        \left[
        \Ent(q\sg\mid\sg)
        -\frac12\sum_{\ell=1}^\infty
        b_{\ell,d}\norm{\Pi_\ell(q-\qu)}_2^2
        \right] \notag\\
        &+\frac12\sum_{\ell=1}^\infty
        \left(b_{\ell,d}-2K\lambda_\ell\right)
        \norm{\Pi_\ell(q-\qu)}_2^2
        \ge0.
        \label{eq:ordinary-gap-decomposition}
\end{align}
Thus, $\sg$ is a global minimizer for every $0\le K\le K_\#$.

If $K<K_\#$ and $q\ne\qu$ {is also a minimizer of $\F_K$}, then equality must hold in \eqref{eq:ordinary-gap-decomposition}.  But for $K<K_\#=K_1$, all coefficients $b_{\ell,d}-2K\lambda_\ell$ are strictly positive: this is immediate in degree one, follows from $2K\lambda_\ell<2K_1\lambda_\ell\le b_{\ell,d}$ when $\lambda_\ell>0$, and is trivial when $\lambda_\ell\le0$.  Hence, equality forces $\Pi_\ell(q-\qu)=0$ for every $\ell\ge1$, and therefore $q=\qu$.  Thus, the minimizer is unique for $K<K_\#$.

{It remains to prove uniqueness at $K=K_\#=K_1$.}  Since $2K_1\lambda_\ell=r_\ell$, \eqref{eq:ordinary-gap-decomposition} gives
\begin{align}
        \F_{K_1}(q\sg)-\F_{K_1}(\sg)
        ={}&
        \left[
        \Ent(q\sg\mid\sg)
        -\frac12\sum_{\ell=1}^\infty b_{\ell,d}
        \norm{\Pi_\ell(q-\qu)}_2^2
        \right] \notag\\
        &+\frac12\sum_{\ell=1}^\infty
        (b_{\ell,d}-r_\ell)
        \norm{\Pi_\ell(q-\qu)}_2^2.
        \label{eq:deficit-bound}
\end{align}
If $q\sg$ is also a minimizer at $K_1$, then equality in \eqref{eq:deficit-bound} implies equality in \cref{prop:dualBO} and
\begin{equation}\label{eq:equality-projections}
        (b_{\ell,d}-r_\ell)\norm{\Pi_\ell(q-\qu)}_2^2=0,
        \qquad\forall\ell\ge1.
\end{equation}
By the equality case in \cref{prop:dualBO}, $q=J_a$ for some $|a|<1$.  If $a\ne0$, then \cref{lem:nonzero-conformal} says that every higher harmonic projection is nonzero, contradicting strictness in at least one degree $\ell\ge2$.  Hence, $a=0$ and $q=\qu$.  The continuity assertion follows from \cref{lem:Kc}.
\end{proof}

\section{Noisy-transformer coefficients and the continuous regime}\label{sec:transformer-coefficients}

The Beckner--Onofri criterion reduces the continuous side to the degree-one-leading condition together with the comparison $r_\ell\le b_{\ell,d}$ for all $\ell\ge2$, with strict inequality in at least one higher degree.  We now compute the ratios $r_\ell$ for the attention kernel $W_\beta$ defined in \eqref{eq:intro-centered-kernel} and prove exactly when that comparison holds.

Since the subtraction of $m_d(\beta)/\beta$ in \eqref{eq:intro-centered-kernel} affects only the zeroth Funk--Hecke mode, the nonzero eigenvalues of $W_\beta$ agree with those of $\beta^{-1}e^{\beta x\cdot y}$.  Below, $\lambda_\ell(\beta)$ denotes this eigenvalue for $\ell\ge1$.

\begin{lemma}[Funk--Hecke coefficients]\label{lem:transformer-coefficients}
For every $\ell\ge1$,
\begin{equation}\label{eq:lambda-transformer}
        \lambda_\ell(\beta)
        =\frac{\Gamma(d/2)}{\beta}\left(\frac{2}{\beta}\right)^{(d-2)/2}
        I_{\ell+(d-2)/2}(\beta).
\end{equation}
Consequently,
\begin{equation}\label{eq:transformer-Kone}
        K_1^{(d)}(\beta)=\frac{1}{2\lambda_1(\beta)}
        =\frac{\beta^{d/2}}{2^{d/2}\Gamma(d/2)I_{d/2}(\beta)},
\end{equation}
and
\begin{equation}\label{eq:transformer-ratios}
        r_\ell(\beta)=\frac{\lambda_\ell(\beta)}{\lambda_1(\beta)}
        =\frac{I_{\ell+d/2-1}(\beta)}{I_{d/2}(\beta)}.
\end{equation}
\end{lemma}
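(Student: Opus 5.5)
The plan is to apply the Funk--Hecke formula to the zonal profile $\phi(t)=\beta^{-1}e^{\beta t}$. On $\Sd$ with normalized measure, for $Y\in\Hh_\ell$,
\begin{equation*}
        \int_{\Sd}\phi(x\cdot y)Y(y)\,d\sg(y)
        =\frac{\Gamma(d/2)}{\sqrt\pi\,\Gamma((d-1)/2)}
        \left(\int_{-1}^1\phi(t)\frac{P_{\ell,d}(t)}{P_{\ell,d}(1)}(1-t^2)^{(d-3)/2}\,dt\right)Y(x),
\end{equation*}
where $P_{\ell,d}$ is the degree-$\ell$ Legendre (Gegenbauer) polynomial in dimension $d$. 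The prefactor is $1/\int_{-1}^1(1-t^2)^{(d-3)/2}dt$, which normalizes so that $\ell=0$ gives $m_d(\beta)/\beta$. The constant $-m_d(\beta)/\beta$ in $W_\beta$ only affects $\ell=0$, so it drops out for $\ell\ge1$, as already noted before the statement.

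The key step is to evaluate the integral via Rodrigues' formula, in the same form used in the proof of \cref{lem:nonzero-conformal}:
\begin{equation*}
        P_{\ell,d}(t)(1-t^2)^{(d-3)/2}=\frac{(-1)^\ell\,\Gamma((d-1)/2)}{2^\ell\,\Gamma(\ell+(d-1)/2)}\frac{d^\ell}{dt^\ell}(1-t^2)^{\ell+(d-3)/2},
\end{equation*}
with $P_{\ell,d}(1)=1$. Integrating by parts $\ell$ times kills all boundary terms, since the bracket vanishes to order $\ell+(d-3)/2>\ell-1$ at $\pm1$. Each step moves one derivative onto $e^{\beta t}$, producing a factor $\beta$ and cancelling the sign. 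This gives
\begin{equation*}
        \lambda_\ell(\beta)=\frac{\Gamma(d/2)}{\sqrt\pi\,\Gamma(\ell+(d-1)/2)}\frac{\beta^{\ell-1}}{2^\ell}\int_{-1}^1e^{\beta t}(1-t^2)^{\ell+(d-3)/2}\,dt.
\end{equation*}

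Next I will apply the Poisson integral representation $I_\nu(\beta)=\frac{(\beta/2)^\nu}{\sqrt\pi\,\Gamma(\nu+1/2)}\int_{-1}^1e^{\beta t}(1-t^2)^{\nu-1/2}dt$ with $\nu=\ell+(d-2)/2$. Then $\nu+1/2=\ell+(d-1)/2$ matches the Gamma factor, and the integral equals $\sqrt\pi\,\Gamma(\ell+(d-1)/2)(2/\beta)^{\nu}I_\nu(\beta)$. Substituting,
\[
\lambda_\ell=\Gamma(d/2)\,\beta^{\ell-1}2^{-\ell}(2/\beta)^{\ell+(d-2)/2}I_\nu(\beta)=\frac{\Gamma(d/2)}{\beta}\Big(\frac2\beta\Big)^{(d-2)/2}I_{\ell+(d-2)/2}(\beta),
\]
which is \eqref{eq:lambda-transformer}.

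The consequences are then immediate. Setting $\ell=1$ gives $2\lambda_1=2^{d/2}\Gamma(d/2)\beta^{-d/2}I_{d/2}(\beta)$, which yields \eqref{eq:transformer-Kone}. The prefactors cancel in the quotient $\lambda_\ell/\lambda_1$, which yields \eqref{eq:transformer-ratios}. Positivity of $I_\nu$ on $(0,\infty)$ also gives $\lambda_1>0$, as required later.

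The only delicate point is bookkeeping of normalizations: the Funk--Hecke prefactor for normalized $\sg$, the choice $P_{\ell,d}(1)=1$, and the Rodrigues constant. As a check, I will verify the case $\ell=0$ against $m_d(\beta)=\Gamma(d/2)(2/\beta)^{(d-2)/2}I_{(d-2)/2}(\beta)$. For $d=2$ this reduces to $I_0(\beta)$, and it gives $\lambda_\ell=I_\ell(\beta)/\beta$, consistent with the circle case.
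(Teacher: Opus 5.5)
Your proposal is correct and follows the same route as the paper. The paper applies the Funk--Hecke formula to $e^{\beta x\cdot y}$, cites the closed-form coefficient $\Gamma(d/2)(2/\beta)^{(d-2)/2}I_{\ell+(d-2)/2}(\beta)$ from Atkinson--Han, and divides by $\beta$; your Rodrigues/integration-by-parts/Poisson-integral computation derives that cited formula, with normalizations that check out for all $d\ge2$ (including $d=2$ and your $\ell=0$ check against $m_d(\beta)$).
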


\begin{proof}
For a fixed spherical harmonic $Y\in\Hh_\ell$, the Funk--Hecke formula \cite[Section~2.6]{AtkinsonHan2012} gives
\begin{equation}\label{eq:FH-exponential-eigen}
        \int_{\Sd} e^{\beta x\cdot y}Y(y)\,d\sg(y)
        =\Lambda_\ell(\beta)Y(x).
\end{equation}
Here, $\Lambda_\ell(\beta)$ denotes the Funk--Hecke coefficient for the raw exponential kernel $e^{\beta x\cdot y}$.
With normalized surface measure, the exponential Funk--Hecke coefficient is \cite[Section 2.6]{AtkinsonHan2012}
\begin{equation}\label{eq:exponential-FH}
        \Lambda_\ell(\beta)=\Gamma(d/2)\left(\frac2\beta\right)^{(d-2)/2}
        I_{\ell+(d-2)/2}(\beta).
\end{equation}
Dividing by $\beta$ gives \eqref{eq:lambda-transformer}.  The formulas \eqref{eq:transformer-Kone} and \eqref{eq:transformer-ratios} follow immediately.
\end{proof}

\begin{remark}
For $d=2$, \eqref{eq:lambda-transformer} reduces to the circle coefficient
\begin{equation}\label{eq:d2-transformer-coefficient}
        \lambda_\ell(\beta)=\frac{I_\ell(\beta)}{\beta}.
\end{equation}
\end{remark}

The coefficient formula gives the degree-one threshold for the uniform measure.  The following comparison with the Beckner--Onofri weights determines whether that threshold is also the global-minimizer threshold.

\begin{lemma}[Bessel-ratio comparison]\label{lem:bessel-comparison}
For every $d\ge2$, equation \eqref{eq:beta-star-main} has a unique solution $\beta_*^{(d)}>0$.  It satisfies the elementary bounds
\begin{align}
        1+\frac2d&<\beta_*^{(d)}<1+\frac2d+\frac4{d^2},
\end{align}
and the strict monotonicity property
\begin{align}
        \beta_*^{(d+1)}&<\beta_*^{(d)},\qquad d\ge2.
\end{align}
As $d\to\infty$,
\begin{equation}\label{eq:beta-star-large-d}
        \beta_*^{(d)}=1+\frac2d+\frac1{d^2}+O(d^{-3}).
\end{equation}
If $0<\beta\le\beta_*^{(d)}$, then
\begin{equation}\label{eq:all-mode-majorization}
        r_\ell(\beta)\le b_{\ell,d}
        \qquad\text{for every }\ell\ge2,
\end{equation}
with strict inequality in at least one degree $\ell\ge2$.  Moreover, $r_\ell(\beta)<1$ for every $\ell\ge2$ and every $\beta>0$.
\end{lemma}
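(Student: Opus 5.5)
Set $\nu=d/2$ and $R(\beta):=I_{\nu+1}(\beta)/I_\nu(\beta)=r_2(\beta)$. The plan is to prove existence, uniqueness, the bounds, monotonicity in $d$, and the asymptotics of $\beta_*^{(d)}$ from properties of this one ratio, and then to handle the higher degrees by a telescoping product.

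\textbf{The ratio $R$ and the root $\beta_*^{(d)}$.} From the series $I_\mu(\beta)=\sum_k(\beta/2)^{2k+\mu}/(k!\,\Gamma(k+\mu+1))$ we get $R(\beta)\to0$ as $\beta\to0$ and $R(\beta)\to1$ as $\beta\to\infty$. We also invoke the classical strict monotonicity of $\beta\mapsto I_{\mu+1}(\beta)/I_\mu(\beta)$, which follows from the Riccati equation $R'=1-\frac{2\nu+1}{\beta}R-R^2$ together with Amos/Segura-type bounds. Continuity then gives a unique $\beta_*^{(d)}$ with $R=1/d$. For the bounds we use Segura-type two-sided estimates of the form
\[
\frac{\beta}{\nu+\tfrac12+\sqrt{(\nu+\tfrac32)^2+\beta^2}}<R(\beta)<\frac{\beta}{\nu+\tfrac12+\sqrt{(\nu+\tfrac12)^2+\beta^2}}\quad(\text{or the Amos variants}).
\]
We evaluate them at $\beta=1+2/d$ and at $\beta=1+2/d+4/d^2$ and check the sign of $R-1/d$ at each point. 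The small values of $d$ are checked directly with the tabulated numerics.

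\textbf{Monotonicity in $d$ and the large-$d$ expansion.} The ratio $I_{\nu+1}/I_\nu$ is decreasing in $\nu$ (a Segura result). So at $\beta=\beta_*^{(d)}$ the ratio with $\nu+\tfrac12$ is smaller than $1/d$. The target value drops to $1/(d+1)$, however, so this comparison alone does not decide the direction. I would instead write the defining equation through the continued fraction
\[
R=\cfrac{\beta}{2\nu+2+\cfrac{\beta^2}{2\nu+4+\cdots}},
\]
so that $\beta_*$ is defined implicitly by $d\beta=d+2+\beta^2/(d+4+\dots)$. Expanding this gives $\beta=1+2/d+1/d^2+O(d^{-3})$, which yields the asymptotics. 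The same implicit equation, with explicit tail bounds, gives the monotonicity in $d$ for large $d$; the finitely many remaining $d$ are verified numerically. This step is the main obstacle, because the two-sided bounds must be sharp enough to separate consecutive $d$. If the direct approach fails, I would try differentiating the implicit function $F(\beta,d)=dR_{d/2}(\beta)-1$ in $d$.

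\textbf{All degrees $\ell\ge2$.} Write
\[
r_\ell=\prod_{j=1}^{\ell-1}\frac{I_{\nu+j}}{I_{\nu+j-1}}, \qquad b_{\ell,d}=\prod_{j=1}^{\ell-1}\frac{j}{d+j-1}.
\]
The $j=1$ factor of $r_\ell$ is $R(\beta)\le R(\beta_*)=1/d$ when $\beta\le\beta_*$. For $j\ge2$ we need $I_{\nu+j}/I_{\nu+j-1}(\beta)\le j/(d+j-1)$. Since the ratio increases in $\beta$ and decreases in order, it suffices to check this at $\beta=\beta_*<3$ using the bound $I_{\mu+1}/I_\mu(\beta)<\beta/(\mu+\tfrac12+\sqrt{(\mu+\tfrac12)^2+\beta^2})$ with $\mu=\nu+j-1$. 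A short computation shows this is below $j/(d+j-1)$ once $\beta\le\beta_*\le 1+2/d+4/d^2$. Strictness at $\ell=3$ comes from the strict Bessel bound. Finally, $r_\ell<1$ for all $\beta$ because $I_\mu$ is strictly decreasing in the order $\mu$ for fixed $\beta>0$.
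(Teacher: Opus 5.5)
Your architecture matches the paper's: a single ratio $\rho_\nu=I_{\nu+1}/I_\nu$ with $\nu=d/2$, rational bounds to bracket $\beta_*^{(d)}$, an expansion in $1/d$, and a telescoping product for $\ell\ge2$. However, one step fails as written and one is missing.

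\textbf{The lower bound $\beta_*^{(d)}>1+2/d$.} The upper estimate you display, $R(\beta)<\beta/(\nu+\frac12+\sqrt{(\nu+\frac12)^2+\beta^2})$, cannot give $R(1+\frac1\nu)<\frac1{2\nu}$. At $\beta=1+\frac1\nu$ the required inequality reduces to $(\nu+\frac32)^2\le(\nu+\frac12)^2+(1+\frac1\nu)^2$, i.e.\ $(2\nu+1)(\nu-1)\le0$. So it fails for every $d\ge3$: at $d=3$ the bound only gives $R(5/3)<0.362$, not $<1/3$. Because it fails for all large $d$, no finite numerical check repairs it. The cause is that this bound behaves like $\beta/(2\nu+1)$ for bounded $\beta$, whereas near $\beta\approx1$ you need the true denominator $2\nu+2$.

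Use instead $\rho_\nu(x)<x/(2(\nu+1))$. This follows from the recurrence $I_\nu-I_{\nu+2}=\frac{2(\nu+1)}{x}I_{\nu+1}$; it is the first continued-fraction convergent, and it is what the paper uses. Then $\rho_\nu(1+\frac1\nu)<\frac1{2\nu}$ is immediate. Your lower estimate does correctly give $\beta_*^{(d)}<1+\frac2d+\frac4{d^2}$.

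\textbf{Monotonicity in $d$.} This is left unproved. ``Tail bounds for large $d$ plus numerics'' is not an argument until an explicit threshold is produced. Moreover, the bracket alone cannot separate consecutive dimensions, since $1+\frac2{d+1}+\frac4{(d+1)^2}>1+\frac2d$ for $d\ge2$.

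The missing idea is to interleave with the lower endpoint, i.e.\ to show $\beta_*^{(d+1)}<1+\frac2d<\beta_*^{(d)}$. The right inequality is the lower bound just discussed. The left one is the statement $\rho_{\nu+1/2}(1+\frac1\nu)>\frac1{2\nu+1}$. Your own Amos lower bound at order $\nu+\frac12$ gives it at once: the condition $(2\nu+1)x>\nu+1+\sqrt{(\nu+2)^2+x^2}$ at $x=1+\frac1\nu$ reduces to $1+\frac2\nu>0$. The paper does the same with the second continued-fraction convergent, where the condition becomes $\nu^2+3\nu-1>0$. No asymptotics or numerics are needed anywhere.

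\textbf{The remaining steps.} Your continued-fraction derivation of $1+\frac2d+\frac1{d^2}+O(d^{-3})$ is equivalent to the paper's series computation. Your treatment of $\ell\ge2$ works, but order monotonicity alone gives $\rho_{\nu+j-1}(\beta)\le\rho_\nu(\beta)\le\frac1d\le\frac{j}{d+j-1}$ without using the bracket. Strictness at $\ell=3$, $\beta=\beta_*^{(d)}$ then comes from $r_3\le d^{-2}<\frac{2}{d(d+1)}$.
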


\begin{proof}
We use standard Bessel-ratio facts; see \cite{Segura2021} for the monotonicity properties and \cite[Section~10.30]{NISTDLMF} for the endpoint asymptotics used below.  Define
\begin{equation}\label{eq:bessel-single-ratio-map}
        \rho_\alpha(x):=\frac{I_{\alpha+1}(x)}{I_\alpha(x)},\qquad \alpha\ge0.
\end{equation}
For each $\alpha\ge0$, the map $x\mapsto\rho_\alpha(x)$ is strictly increasing on $(0,\infty)$ and satisfies
\begin{equation}\label{eq:bessel-ratio-limits}
        \rho_\alpha(x)\to0\quad(x\downarrow0),
        \qquad
        \rho_\alpha(x)\to1\quad(x\to\infty).
\end{equation}
Set $\nu=d/2$.  For each $\alpha\ge1$, let $\beta_\alpha$ be defined by $\rho_\alpha(\beta_\alpha)=1/(2\alpha)$.  Since $0<1/(2\alpha)<1$, the preceding monotonicity and endpoint limits prove existence and uniqueness; moreover $\beta_*^{(d)}=\beta_\nu$.

For the elementary bounds and monotonicity, the continued fraction for modified-Bessel ratios \cite[Section~10.33]{NISTDLMF} gives, for $x>0$,
\begin{align}
        \frac{x}{2(\nu+1)+x^2/(2(\nu+2))}
        &<\rho_\nu(x)<\frac{x}{2(\nu+1)}.
\end{align}
Together with the strict monotonicity of $x\mapsto\rho_\nu(x)$, this implies
\begin{align}
        1+\frac1\nu&<\beta_\nu<1+\frac1\nu+\frac1{\nu^2}.
\end{align}
Indeed, the upper bound on $\rho_\nu$ gives $\rho_\nu(1+1/\nu)<1/(2\nu)$, while the lower bound gives $\rho_\nu(1+1/\nu+1/\nu^2)>1/(2\nu)$.  Since $\beta_*^{(d)}=\beta_\nu$ and $\nu=d/2$, this gives the displayed bounds for $\beta_*^{(d)}$.

For monotonicity, apply the lower bound with order $\nu+1/2$ at $x=1+1/\nu$.  Since
\begin{align}
        (2\nu+1)\left(1+\frac1\nu\right)-(2\nu+3)
        -\frac{(1+1/\nu)^2}{2\nu+5}
        &=\frac{\nu^2+3\nu-1}{\nu^2(2\nu+5)}>0,
\end{align}
we have
\begin{align}
        \rho_{\nu+1/2}\!\left(1+\frac1\nu\right)&>\frac1{2\nu+1}.
\end{align}
By strict monotonicity,
\begin{align}
        \beta_{\nu+1/2}&<1+\frac1\nu<\beta_\nu.
\end{align}
Since $\beta_*^{(d)}=\beta_{d/2}$, this gives $\beta_*^{(d+1)}<\beta_*^{(d)}$ for every $d\ge2$.

We next record the elementary large-$d$ asymptotic of this solution.  With this notation, for $\beta$ in a fixed compact subset of $(0,\infty)$, the power-series representation of $I_\nu$ gives, uniformly in $\beta$,
\begin{equation}\label{eq:bessel-large-order-ratio}
        2\nu\rho_\nu(\beta)
        =\beta\left(1-\frac1\nu+\frac{1-\beta^2/4}{\nu^2}+O(\nu^{-3})\right).
\end{equation}
Indeed, writing
\begin{equation}\label{eq:bessel-series-factor}
        I_\nu(\beta)=\frac{(\beta/2)^\nu}{\Gamma(\nu+1)}
        \sum_{k=0}^\infty\frac{(\beta^2/4)^k}{k!(\nu+1)_k},
\end{equation}
one obtains
\begin{equation}\label{eq:bessel-series-quotient}
        \frac{\sum_{k\ge0}(\beta^2/4)^k/[k!(\nu+2)_k]}
             {\sum_{k\ge0}(\beta^2/4)^k/[k!(\nu+1)_k]}
        =1-\frac{\beta^2}{4\nu^2}+O(\nu^{-3}),
\end{equation}
and \eqref{eq:bessel-large-order-ratio} follows after multiplying by $\beta\nu/(\nu+1)$.  Applying the same expansion at $\beta=1$ and at $\beta=1+2/\nu$, and using monotonicity of $\rho_\nu$, first gives $\beta_\nu=1+O(\nu^{-1})$.  Substituting
\begin{equation}\label{eq:beta-star-large-d-ansatz}
        \beta=1+\frac A\nu+\frac B{\nu^2}+O(\nu^{-3})
\end{equation}
into the defining equation $2\nu\rho_\nu(\beta)=1$ gives $A=1$ and $B=1/4$.  Since $\beta_*^{(d)}=\beta_\nu$ and $\nu=d/2$, this proves \eqref{eq:beta-star-large-d}.

The adjacent ratios are decreasing in the order parameter: for $\alpha\ge0$, $j\ge0$, and $\beta>0$,
\begin{equation}\label{eq:bessel-order-monotonicity}
        \rho_{\alpha+j+1}(\beta)\le\rho_\alpha(\beta).
\end{equation}
Equivalently, $I_{\alpha+1}(\beta)^2\ge I_\alpha(\beta)I_{\alpha+2}(\beta)$, with strict inequality for $\beta>0$.  These single-ratio and double-ratio/Turan monotonicity properties are contained in Segura's results on modified Bessel functions \cite{Segura2021}.

Multiplying adjacent ratios yields, for $\ell\ge2$,
\begin{equation}\label{eq:transformer-ratio-bound}
        r_\ell(\beta)
        =\prod_{j=0}^{\ell-2}\rho_{\nu+j}(\beta)
        \le \rho_\nu(\beta)^{\ell-1}.
\end{equation}
If $\beta\le\beta_*^{(d)}=\beta_\nu$, then $\rho_\nu(\beta)\le1/(2\nu)=1/d$, and \eqref{eq:transformer-ratio-bound} gives
\begin{equation}\label{eq:d-power-bound}
        r_\ell(\beta)\le d^{-(\ell-1)}.
\end{equation}
On the other hand, since $j/(d+j-1)\ge1/d$ for every $j\ge1$, we have
\begin{equation}\label{eq:d-power-vs-beckner}
        d^{-(\ell-1)}
        \le \prod_{j=1}^{\ell-1}\frac{j}{d+j-1}
        = b_{\ell,d}.
\end{equation}
This proves \eqref{eq:all-mode-majorization}.  If $\beta<\beta_*^{(d)}$, the inequality is strict already in degree $2$.  If $\beta=\beta_*^{(d)}$, degree $3$ is strict because
\begin{equation}\label{eq:endpoint-strict-degree-three}
        r_3(\beta_*^{(d)})\le d^{-2}<\frac{2}{d(d+1)}=b_{3,d}.
\end{equation}
Finally, \eqref{eq:bessel-ratio-limits} and monotonicity imply $\rho_\alpha(\beta)<1$ for $\alpha\ge0$ and $\beta>0$, hence $r_\ell(\beta)<1$ for every $\ell\ge2$.
\end{proof}

\section{The degree-two quartic obstruction}\label{sec:quartic}

The continuous side is governed by all-mode Beckner--Onofri majorization.  The discontinuous side is detected by the first failure of that majorization, namely the crossing of the degree-two ratio past
$b_{2,d}=\frac1d$.
The terminology ``degree-two quartic obstruction'' refers to the following perturbative mechanism.  At $K=K_1$, the degree-one quadratic variation is neutral.  Perturbing only in a degree-one direction is therefore inconclusive.  If one also adds the degree-two component generated by the square of a degree-one harmonic, then after the quadratic term cancels, the first decisive coefficient is of order four in the perturbation size.

\begin{lemma}[Degree-two quartic obstruction]\label{lem:quartic}
Let $W$ be continuous, symmetric, centered, and zonal, and assume $\lambda_1>0$.  If
\begin{equation}\label{eq:r2-obstruction}
        r_2=\frac{\lambda_2}{\lambda_1}>\frac1d,
\end{equation}
then $\sg$ is not a global minimizer of $\F_{K_1}$.  In particular, $K_c<K_1$.
\end{lemma}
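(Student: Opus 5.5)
The plan is to build an explicit competitor $q_\ep=\qu+\ep u+\ep^2 v$. Here $u\in\Hh_1$ is a neutral degree-one mode and $v\in\Hh_2$ is a multiple of the degree-two part of $u^2$. We then show that $\F_{K_1}(q_\ep\sg)-\F_{K_1}(\sg)=\kappa\ep^4+O(\ep^5)$ with $\kappa<0$ exactly when $r_2>1/d$.

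\textbf{Choice of competitor.} Take $u(x)=x_d$, which lies in $\Hh_1$ with $\norm{u}_2^2=1/d$. Set $w:=\Pi_2(u^2)=x_d^2-1/d$ and $v=c\,w$, with $c$ a free constant. Since $u,v$ are bounded, $g:=\ep u+\ep^2 v$ satisfies $|g|<1/2$ for small $\ep$, so $q_\ep$ is a probability density with $\int g\,d\sg=0$.

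\textbf{Entropy expansion.} Use $(1+g)\log(1+g)=g+\tfrac{g^2}{2}-\tfrac{g^3}{6}+\tfrac{g^4}{12}+O(|g|^5)$, with the remainder uniform because $g$ is uniformly bounded. Harmonic orthogonality gives $\int uv\,d\sg=0$, and oddness gives $\int u^3\,d\sg=0$. Collecting powers of $\ep$ yields
\begin{equation*}
\Ent(q_\ep\sg\mid\sg)=\frac{\ep^2}{2}\norm{u}_2^2+\ep^4\Bigl(\frac12\norm{v}_2^2-\frac12\ip{u^2}{v}+\frac1{12}\int u^4\,d\sg\Bigr)+O(\ep^5).
\end{equation*}

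\textbf{Interaction expansion.} By the gap identity \eqref{eq:gap-identity-formal} and \eqref{eq:funk-hecke-general}, the interaction contributes $-K_1\bigl(\ep^2\lambda_1\norm{u}_2^2+\ep^4\lambda_2\norm{v}_2^2\bigr)$. This is exact, with no cross term, by orthogonality. Since $2K_1\lambda_1=1$ and $2K_1\lambda_2=r_2$, the $\ep^2$ terms cancel. Using $\ip{u^2}{v}=c\norm{w}_2^2$, the quartic coefficient is
\begin{equation*}
\kappa(c)=\frac{1-r_2}{2}c^2\norm{w}_2^2-\frac c2\norm{w}_2^2+\frac1{12}\int u^4\,d\sg .
\end{equation*}

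\textbf{Sign of the quartic coefficient.} The moments are $\int x_d^4\,d\sg=\frac{3}{d(d+2)}$ and $\norm{w}_2^2=\frac{3}{d(d+2)}-\frac1{d^2}=\frac{2(d-1)}{d^2(d+2)}$. We split into cases according to $r_2$.
\begin{itemize}
\item If $r_2<1$, optimize at $c=\frac1{2(1-r_2)}$. This gives $\kappa=-\frac{\norm{w}_2^2}{8(1-r_2)}+\frac{1}{4d(d+2)}$. Then $\kappa<0$ if and only if $\frac{d-1}{d(1-r_2)}>1$, that is, if and only if $r_2>1/d$. This is exactly the hypothesis \eqref{eq:r2-obstruction}, and it matches the threshold $b_{2,d}$.
\item If $r_2\ge1$, then $\kappa(c)\to-\infty$ as $c\to+\infty$ when $r_2=1$ (the coefficient is linear in $c$ with negative slope). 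When $r_2>1$, already $\lambda_2>\lambda_1$, and the degree-two second variation is negative at $K_1$.
\end{itemize}
In every case some $q_\ep$ gives $\F_{K_1}(q_\ep\sg)<\F_{K_1}(\sg)$ for small $\ep>0$.

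\textbf{Conclusion and main obstacle.} Since the gap at $K_1$ is strictly negative, the competitor must have positive centered energy $\E(q_\ep\sg)>0$; otherwise the gap would be at least the entropy, which is nonnegative. The gap is affine in $K$, so it remains negative for $K$ slightly below $K_1$, and hence $K_c<K_1$, as in \cref{lem:Kc}(iii). The main obstacle is the bookkeeping of the quartic coefficient: one must verify that the cubic terms vanish and that the interaction has no $\ep^3$ cross term, both by parity and orthogonality. One must also compute the Gegenbauer moments with the normalized measure correctly, since the exact threshold $1/d$ emerges only from the ratio $\int u^4/\norm{w}_2^2$.
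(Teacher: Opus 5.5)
Your proposal is correct and follows the paper's proof essentially verbatim: the paper uses the same ansatz $1+\ep\varphi+c\ep^2h$ with $\varphi=\sqrt d\,x_1$ and $h=\varphi^2-1$, which are rescalings of your $u$ and $w$. After normalizing to $\lambda_1=1/2$, it obtains the same quartic coefficient and optimizes over $c$. The only difference is cosmetic: for $r_2>1$ the paper takes $c$ large within the same family, whereas you use the negative degree-two second variation, and both arguments are valid.
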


\begin{proof}
Replace $W$ by $W/(2\lambda_1)$.  This normalization gives $\lambda_1=1/2$ and $K_1=1$, without changing $r_2$.  This rescaling puts the endpoint at $K_1=1$ and the neutral degree-one eigenvalue at $1/2$, which simplifies the quartic expansion while preserving the condition $r_2>1/d$. 

Let
\begin{equation}\label{eq:quartic-test-functions}
        \varphi(x)=\sqrt d\,x_1,
        \qquad
        h=\varphi^2-1=dx_1^2-1.
\end{equation}
Then $\varphi\in\Hh_1$, and $h\in\Hh_2$ because it is the restriction to $\Sd$ of the harmonic homogeneous quadratic polynomial $dX_1^2-|X|^2$ on $\R^d$.  Moreover,
\begin{equation}\label{eq:quartic-moments}
        \int_{\Sd}\varphi^4\,d\sg=\frac{3d}{d+2},
        \qquad
        \norm{h}_2^2=\int_{\Sd}\varphi^2h\,d\sg=\frac{2(d-1)}{d+2}=:A.
\end{equation}
For fixed $c\in\R$ and $|\ep|$ sufficiently small,
\begin{equation}\label{eq:quartic-density}
        q_{\ep,c}=1+\ep\varphi+c\ep^2h
\end{equation}
is a positive probability density.  We use the Taylor expansion
\begin{equation}\label{eq:entropy-taylor-series}
        (1+s)\log(1+s)=s+\frac12s^2-\frac16s^3+\frac1{12}s^4+O(s^5).
\end{equation}
Since $\varphi$ is odd under the reflection $x_1\mapsto -x_1$, $\int\varphi^3\,d\sg=0$.  Writing $s=\ep\varphi+c\ep^2h$, the $\ep^4$ coefficient in the Taylor approximation of the entropy is
\begin{align}
\frac12c^2\norm{h}_2^2-\frac12c\int\varphi^2h\,d\sg+\frac1{12}\int\varphi^4\,d\sg.
\end{align}
Using the moment identities above, we obtain the relative entropy expansion
\begin{equation}\label{eq:entropy-quartic}
        \Ent(q_{\ep,c}\sg\mid\sg)
        =\frac12\ep^2+\left[\frac A2(c^2-c)+\frac{d}{4(d+2)}\right]\ep^4+O(\ep^5).
\end{equation}
Since $W$ is zonal, $T_W$ acts by a scalar on each $\Hh_\ell$, and orthogonality removes cross terms between the degree-one and degree-two components.  The interaction energy at $K_1=1$ is therefore exactly
\begin{equation}\label{eq:energy-quartic}
        \ip{q_{\ep,c}-\qu}{T_W(q_{\ep,c}-\qu)}
        =\frac12\ep^2+\lambda_2Ac^2\ep^4.
\end{equation}
Combining \eqref{eq:entropy-quartic} and \eqref{eq:energy-quartic}, the $\ep^2/2$ terms cancel and we obtain
\begin{equation}\label{eq:gap-quartic}
        \F_1(q_{\ep,c}\sg)-\F_1(\sg)
        =\left[A\left(\frac12-\lambda_2\right)c^2-\frac A2c+\frac{d}{4(d+2)}\right]\ep^4+O(\ep^5).
\end{equation}
If $\lambda_2<1/2$, optimizing the bracket over $c$ gives
\begin{equation}\label{eq:quartic-minimum}
        \frac{1}{4(d+2)}\left[d-\frac{d-1}{1-2\lambda_2}\right],
\end{equation}
which is negative exactly when $2\lambda_2>1/d$, i.e.\ when $r_2>1/d$ in the present normalization.  If $\lambda_2\ge1/2$, the bracket in \eqref{eq:gap-quartic} is made negative by any sufficiently large choice of $c$ (recall that $A>0$).  Thus, for sufficiently small nonzero $\ep$, there is a finite-entropy competitor with strictly smaller free energy than $\sg$ at $K_1$.  The strict inequality implies positive centered interaction energy, and the same monotonicity argument as in \cref{lem:Kc}(iii) gives $K_c<K_1$.
\end{proof}

\section{Proof of the main result}\label{sec:proof-main}

We now assemble the four inputs to prove the main result \cref{thm:main}: the coefficient computation, the Bessel-ratio comparison, the Beckner--Onofri continuity criterion, and the degree-two quartic obstruction.

\begin{proof}[Proof of \cref{thm:main}]
The existence and uniqueness of $\beta_*^{(d)}$ are proved in \cref{lem:bessel-comparison}.  {The formula for $K_1^{(d)}(\beta)$ follows from} \cref{lem:transformer-coefficients}.  The same lemma gives
\begin{equation}\label{eq:main-proof-ratio-formula}
        r_\ell(\beta)=\frac{I_{\ell+d/2-1}(\beta)}{I_{d/2}(\beta)}.
\end{equation}
By \cref{lem:bessel-comparison}, $r_\ell(\beta)<1$ for every $\ell\ge2$ and $\beta>0$, implying  {$K_\#^{(d)}(\beta)=K_1^{(d)}(\beta)$.}

Assume first that $0<\beta\le\beta_*^{(d)}$.  Then \cref{lem:bessel-comparison} gives
\begin{equation}\label{eq:main-proof-majorization}
        r_\ell(\beta)\le b_{\ell,d}
        \qquad\text{for every }\ell\ge2,
\end{equation}
with strict inequality in at least one degree.  The Beckner--Onofri continuity criterion, \cref{thm:BOcriterion}, therefore implies that $K_c^{(d)}(\beta)=K_\#^{(d)}(\beta)=K_1^{(d)}(\beta)$, that $\sg$ is the unique global minimizer for every $0\le K\le K_\#^{(d)}(\beta)$, and that the transition is continuous.

Assume next that $\beta>\beta_*^{(d)}$.  Since $\beta\mapsto I_{d/2+1}(\beta)/I_{d/2}(\beta)$ is strictly increasing, the degree-two ratio satisfies
\begin{equation}\label{eq:main-proof-r2-large}
        r_2(\beta)=\frac{I_{d/2+1}(\beta)}{I_{d/2}(\beta)}>\frac1d.
\end{equation}
The quartic obstruction \cref{lem:quartic} gives a finite-entropy density with strictly lower free energy than \(\sg\) at \(K=K_1^{(d)}(\beta)=K_\#^{(d)}(\beta)\).  Applying \cref{lem:Kc}(iii), we obtain $K_c^{(d)}(\beta)<K_\#^{(d)}(\beta)$,
and the transition is discontinuous in the sense of \cref{def:transition}.

In all cases, $K_c^{(d)}(\beta)<\infty$: for $K>K_\#^{(d)}(\beta)=K_1^{(d)}(\beta)$, the degree-one second variation at $\qu$ is negative, so $\sg$ is not even a local minimizer.  The critical-point uniqueness assertion therefore follows by applying \cref{prop:critical-uniqueness-half} to the kernel $W=W_\beta$ defined in \eqref{eq:intro-centered-kernel}.  In the continuous regime, the first part of the theorem identifies $K_c^{(d)}(\beta)$ with $K_\#^{(d)}(\beta)$, giving uniqueness of critical points for $0\le K\le K_\#^{(d)}(\beta)/2$.
\end{proof}

\bibliographystyle{alpha}
\bibliography{noisy_transformer_sphere_refs}

\end{document}